\newcolumntype{h}{>{\setbox0=\hbox\bgroup}c<{\egroup}@{}}
\definecolor{DarkGreen}{RGB}{0, 164, 0}
\newcommand{\dd}{\mathop{}\!\mathrm{d}}
\newcommand{\topdeg}{\operatorname{topdeg}}
\def\nfrac#1#2{%
    \raise.5ex\hbox{\small{$#1$}}%
    \kern-.18em\big/\kern-.2em%
    \lower.5ex\hbox{\small{$#2$}}}
\newcommand{\dgvert}[1]{\varGamma_{#1}^*}
\newcommand{\dgraph}[1]{\varGamma_{#1}}
\newcommand{\Kd}{{a}}
\newcommand{\Kld}{{A}}
\newcommand{\Div}{\on{Div}}
\newcommand{\cen}{\on{cen}}
\newcommand{\Kpi}{{\pi}} 
\newcommand{\Ksigma}{{\sigma}} 
\DeclareMathAlphabet{\mathbbm}{U}{bbm}{m}{n}
\newcommand{\immdir}{}
\newcommand{\executeiffilenewer}[3]{%
	\ifnum\pdfs
	
	trcmp{\pdffilemoddate{#1}}%
	{\pdffilemoddate{#2}}>0%
	{\immediate\write18{#3}}\fi%
}
\newcommand{%
	\executeiffilenewer{.svg}{.pdf}%
	{inkscape -z -D --file=.svg %
		--export-pdf=.pdf --export-latex}%
	\input{.tex}%
}[1]{%
	\executeiffilenewer{#1.svg}{#1.pdf}%
	{inkscape -z -D --file=#1.svg %
		--export-pdf=#1.pdf --export-latex}%
	\input{#1.tex}%
}
\newcommand{\selfarrow}[1][]{\tiny{\!\!\!\xymatrix{\phantom{|}\ar@`{(6,-6),(6,6)}}\hspace{4mm}}}
\newcommand{\selfdasharrow}{\tiny{\!\!\!\xymatrix{\phantom{|}\ar@{-->}@`{(6,-6),(6,6)}}\hspace{4mm}}}
\newtheorem{introthm}{Theorem}
\newtheorem{thm}{Theorem}[section]
\newtheorem{lem}[thm]{Lemma}
\newtheorem{cor}[thm]{Corollary}
\newtheorem{prop}[thm]{Proposition}
\theoremstyle{definition}
\newtheorem{rem}[thm]{Remark}
\newtheorem{defi}[thm]{Definition}
\newtheorem{ex}[thm]{Example}
\theoremstyle{definition}
\newcommand{\Crit}[1]{\mc{C}(#1)} 
\NewDocumentCommand{\mdeg}{+O{} +O{} }
{
{c_{#1}^{#2}}
}
\NewDocumentCommand{\kk}{+m +O{} +O{}}
{
k_{#1 \ifthenelse{\equal{#2}{}}{}{\stackrel{#3}{\to}{#2}}}
}
\NewDocumentCommand{\ee}{+m +O{} +O{}}
{
e_{#1 \ifthenelse{\equal{#2}{}}{}{\stackrel{#3}{\to}{#2}}}
}
\newcommand{\divi}{\mathrm{div}}
\newcommand{\inte}{\mathrm{int}}
\newcommand{\subrelcpct}{\Subset}
\newcommand{\Ediv}[2][]{\mc{E}(#2)_{#1}}
\newcommand{\VC}[1]{\mc{C}_{#1}}
\newcommand{\Frac}{\on{Frac}}					
\newcommand{\Val}[1][]{\mc{V}_{#1}}
\NewDocumentCommand{\NVal}{+O{} +O{} +O{}}
{
\ifthenelse{\equal{#3}{}}{{\mc{V}_{#1}^{#2}}}
{\ifthenelse{\equal{#2}{}}{{\mc{V}_{#1}^{#3}}}{{\mc{V}_{#1}^{#2,#3}}}}
}
\NewDocumentCommand{\CVal}{+O{} +O{} +O{}}
{
\ifthenelse{\equal{#3}{}}{{\hat{\mc{V}}_{#1}^{#2}}}
{\ifthenelse{\equal{#2}{}}{{\hat{\mc{V}}_{#1}^{#3}}}{{\hat{\mc{V}}_{#1}^{#2,#3}}}}
}
\NewDocumentCommand{\FVal}{+O{} +O{}}
{
\ifthenelse{\equal{#2}{}}
{{\hat{\mc{V}}_{#1}^{*}}}
{{\hat{\mc{V}}_{#1}^{*,#2}}}
}
\newcommand{\triv}{\on{triv}}
\newcommand{\var}{{\displaystyle\cdot}}
\NewDocumentCommand{\skel}{+O{} D(){}}
{
\mc{S}_{#1}\ifthenelse{\equal{#2}{}}{}{(#2)}
}
\newcommand{\JD}[1]{R_{#1}}
\newcommand{\mytitle}{Strict germs on normal surface singularities}
\title{\mytitle}
\author{Matteo Ruggiero}
\address{Université de Paris and Sorbonne Université, CNRS, Institut de Mathématiques de Jussieu-Paris Rive Gauche, F-75013 Paris, France}
\email{\href{mailto:matteo.ruggiero@imj-prg.fr}{matteo.ruggiero@imj-prg.fr}}
\date{\today}
\newcommand{\refprop}[1]{\hyperref[prop:#1]{Proposition~\ref*{prop:#1}}}
\newcommand{\refthm}[1]{\hyperref[thm:#1]{Theorem~\ref*{thm:#1}}}
\newcommand{\refcor}[1]{\hyperref[cor:#1]{Corollary~\ref*{cor:#1}}}
\newcommand{\refdef}[1]{\hyperref[def:#1]{Definition~\ref*{def:#1}}}
\newcommand{\refrmk}[1]{\hyperref[rmk:#1]{Remark~\ref*{rmk:#1}}}
\newcommand{\refrem}[1]{\hyperref[rem:#1]{Remark~\ref*{rem:#1}}}
\newcommand{\reflem}[1]{\hyperref[lem:#1]{Lemma~\ref*{lem:#1}}}
\newcommand{\refsec}[1]{\hyperref[sec:#1]{Section~\ref*{sec:#1}}}
\newcommand{\refssec}[1]{\hyperref[ssec:#1]{Subsection~\ref*{ssec:#1}}}
\newcommand{\reffig}[1]{\hyperref[fig:#1]{Figure~\ref*{fig:#1}}}
\newcommand{\refex}[1]{\hyperref[ex:#1]{Example~\ref*{ex:#1}}}
\newcommand{\refsssec}[1]{\hyperref[sssec:#1]{Subsection~\ref*{sssec:#1}}}
\newcommand{\refeqn}[1]{\hyperref[eqn:#1]{Equation~(\ref*{eqn:#1})}}
\begin{document}

\maketitle
\thispagestyle{empty}

\vspace{-5mm}
\begin{center}
\textit{
To Bernard Teissier on the occasion of his 80th birthday.}
\end{center}

\begin{abstract}\noindent
We show that any holomorphic germ $f \colon (X,x_0) \to (Y,y_0)$ of topological degree $1$ between normal surface singularities can be written as $f=\Kpi \circ \Ksigma$, where $\Kpi \colon Y' \to (Y,y_0)$ is a modification and $\Ksigma \colon (X,x_0) \to (Y',y_1)$ is a local isomorphism sending $x_0$ to a point $y_1 \in \Kpi^{-1}(y_0)$.
A result by Fantini, Favre and myself guarantees that when $f$ is a selfmap, then $(X,x_0)$ is a sandwiched singularity. We give here an alternative proof based on the construction of the associated Kato surfaces, and valuative dynamics.

\end{abstract}


\section*{Introduction}

Given a geometric object $X$, one of the most natural tasks is to determine its symmetries.
When $(X,x_0)$ is the germ of a complex analytic space $X$ at a point $x_0$, a natural object to study is the group of analytic automorphisms $\on{Aut}(X,x_0)$.
This space is always huge: a theorem of M\"uller~\cite[p.230--231]{muller:liegroupsanalyticalgebras} states that any singularity $(X,x_0)$ carries countably many analytic vector fields tangent to $(X,x_0)$ and linearly independent.
By taking the flow of such vector fields, we deduce that the automorphism group $\on{Aut}(X,x_0)$ is always infinite-dimensional.

However, the automorphisms built with M\"uller's construction are often small perturbations of the identity (the vector fields have high order at the singularity $x_0$).
The existence of ``special'' automorphisms restricts greatly the geometry of the singularity $(X,x_0)$.
For example, a singularity admits a \emph{contracting} automorphism (i.e., $f(X) \subrelcpct X$ and $\displaystyle \bigcap_{n \geq 0} f^n(X)=\{x_0\}$ for a suitable representant of the germ $f$) if and only if the singularity is \emph{quasihomogeneous} (see \cite{orlik-wagreich:isolatedsingagsurfCaction, camacho-movasati-scardua:quasihomosteinsurfsing,favre-ruggiero:normsurfsingcontrauto} in dimension $2$ and \cite{ornea-verbitsky:algconesLCKmflspot,morvan:singadmitcontrauto} in higher dimensions).


Similar rigidity phenomena appear when one looks at the space $\on{End}(X,x_0)$ of analytic endomorphisms.
For example, while any singularity admits lots of dynamically interesting endomorphisms (see \refrem{hitopnonfinite}), having a finite-to-one selfmap (which is not an automorphism) greatly restricts the geometry of the singularity (it must be \emph{log-canonical}, see \cite{wahl:charnumlinksurfsing, favre:holoselfmapssingratsurf} in dimension $2$, and \cite{boucksom-defernex-favre:volumeisolatedsing, broustet-horing:singendo, zhang:isolatedsingnoninvfiniteendo} in higher dimensions).

In these notes, we focus on the case of \emph{generically injective} selfmaps $f \colon (X,x_0) \selfarrow$, i.e., of topological degree $1$. The ones that are not automorphisms are called \emph{strict germs}.

A special class of strict germs is given by \emph{Kato germs}, i.e., germs of the form $f=\Kpi \circ \Ksigma$, where $\Kpi \colon X' \to (X,x_0)$ is a modification (a proper bimeromorphic map which is an isomorphism outside $x_0$), and $\Ksigma \colon (X,x_0) \to (X',x_1)$ is a local isomorphism, where $x_1 \in \Kpi^{-1}(x_0)$.

When $(X,x_0) \cong (\nC^2,0)$ is regular, it is known by experts that any strict germ $f \colon (\nC^2,0) \to (\nC^2,0)$ admits a Kato decomposition (see, e.g., \cite[Proposition 0.5]{favre:rigidgerms}); however, to my knowledge, an explicit proof of this fact seems to be missing in the literature.
In these notes, we prove this statement in the more general setting of strict germs between normal surface singularities.

\begin{introthm}\label{thm:strictkato}
Let $f\colon (X,x_0) \to (Y,y_0)$ be a strict germ between two normal surface singularities.
Then there exists a modification $\Kpi\colon Y' \to (Y,y_0)$, a point $y_1 \in \Kpi^{-1}(y_0)$, and a local isomorphism $\Ksigma \colon (X,x_0) \to (Y',y_1)$ so that $f=\Kpi \circ \Ksigma$.
\end{introthm}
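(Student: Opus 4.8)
The strategy is to resolve $f$ by a sequence of point blow-ups on the target, using the fact that a topological degree $1$ map is generically injective, hence birational onto its image. The plan is to proceed as follows.

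First, I would reduce to the case where $(X,x_0)$ is smooth, or at least understand the indeterminacy behaviour of $f$ after composing with modifications. Concretely, pick a resolution $p\colon \tilde X \to (X,x_0)$ and, on the target side, start with the trivial modification $Y^{(0)} = Y$. Since $f$ has topological degree $1$, the meromorphic map $f$ is birational, so for any modification $\Kpi^{(n)}\colon Y^{(n)} \to (Y,y_0)$ the lift $f^{(n)}\colon (X,x_0) \dashrightarrow Y^{(n)}$ is still a well-defined holomorphic germ near $x_0$ onto its image away from a proper analytic subset. The key invariant to control is the local multiplicity (or the order of contact with the exceptional locus), and the goal is to blow up points of $Y^{(n)}$ so as to strictly decrease a suitable measure of the "degree" of $f^{(n)}$ until it becomes a local isomorphism.

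The core of the argument is a valuative one, in the spirit of the work of Favre–Jonsson and of Fantini–Favre–Ruggiero alluded to in the abstract. The point is to look at the image $\mu = f_* \nu_{x_0}$ of a suitable valuation centred at $x_0$ (e.g. the multiplicity valuation or a divisorial valuation), which is a valuation centred at $y_0$ on $(Y,y_0)$. Because $f$ is dominant of degree $1$, $f$ induces an isometry (or at least a monotone map) between appropriate valuation spaces, and $\mu$ is again divisorial or quasimonomial. One then chooses a modification $\Kpi\colon Y' \to (Y,y_0)$ in whose exceptional divisor the relevant valuation $\mu$ is realised, or more precisely such that the centre $y_1$ of $\nu_{x_0}$ under $f$ (viewed in $Y'$) is a point where the lift $\Ksigma = (\Kpi)^{-1}\circ f$ has an invertible differential. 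The existence of such a modification is where the heavy machinery — resolution of surface singularities, the structure of the valuation space, and the dynamical/valuative results quoted earlier in the paper — gets used.

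Finally, I would verify that the resulting $\Ksigma\colon (X,x_0)\to (Y',y_1)$ is genuinely a local isomorphism and not merely a finite map or a map with some exceptional curve contracted. Since $\Ksigma$ is birational of topological degree $1$ and $\Kpi \circ \Ksigma = f$ is holomorphic with $f^{-1}(y_0)=\{x_0\}$ as germs, $\Ksigma$ cannot contract any curve through $x_0$ (such a curve would be contracted by $f$ as well, contradicting that $f$ is a germ of map, not identically equal to $y_0$); and $\Ksigma$ has no indeterminacy at $x_0$ by the choice of $Y'$; so $\Ksigma$ is a finite birational morphism between normal surface germs, hence an isomorphism by Zariski's main theorem. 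I expect the main obstacle to be the middle step: showing that finitely many well-chosen blow-ups on $Y$ suffice to remove all indeterminacy and all ramification of the lift simultaneously. This requires a carefully chosen decreasing quantity (a length, a sum of multiplicities along the exceptional divisor, or a valuative distance) and an argument that it cannot decrease indefinitely; handling the non-smooth target $(Y,y_0)$ — where "blow-up of a point" must be interpreted as a well-chosen modification dominating the point — is the delicate part, and is presumably where the construction of the associated Kato surfaces enters.
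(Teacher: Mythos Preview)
Your proposal has a genuine gap and at least one factual error that undermines the final step.

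\textbf{The factual error.} You write that ``$f^{-1}(y_0)=\{x_0\}$ as germs'', and use this to conclude that $\Ksigma$ cannot contract any curve. This is false for a strict germ: since $f$ has topological degree $1$ but is \emph{not} an isomorphism, it is necessarily non-finite, and in fact its entire critical set is contracted to $y_0$ (this is Lemma~\ref{lem:imagecrit2dstrict} in the paper). So $f^{-1}(y_0)$ is a curve, not a point, and the argument you give for why $\Ksigma$ contracts nothing collapses.

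\textbf{The missing idea.} Your valuative sketch pushes forward a \emph{single} valuation (the multiplicity valuation, or some divisorial one) and tries to realise it in a model of $Y$. This does not pin down the right modification. The paper's valuative proof instead tracks the finite set $\VC{f}$ of \emph{contracted curve semivaluations}: their images under $f_\bullet$ are divisorial, and one chooses the modification $\pi\colon Y_\pi\to (Y,y_0)$ so that the set of exceptional primes is \emph{exactly} $f_\bullet(\VC{f})$. Injectivity of $f_\bullet$ (a consequence of $\topdeg f=1$) then forces $f_\bullet(\NVal[X]\setminus\VC{f})$ to miss the skeleton $\skel[\pi]^*$, hence to lie in a single open set $U_\pi(y_1)$; the valuative criterion of properness gives regularity of $\sigma=\pi^{-1}\circ f$, and by construction $\sigma$ contracts no curve. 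Your ``decrease some invariant by successive blow-ups'' strategy never identifies the invariant, and the paper simply does not need one: the correct $\pi$ is written down in one step.

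\textbf{A minor misattribution.} Kato surfaces play no role in the proof of this theorem; they enter only in the proof of Theorem~\ref{thm:strictsandwiched}.
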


In particular, any strict selfmap $f \colon (X,x_0) \selfarrow$ of a normal surface singularity is a Kato germ.
Admitting a Kato germ can be seen as a \emph{selfsimilarity} property for a singularity $(X,x_0)$ (other selfsimilarity concepts have been considered in the literature, see, e.g., \cite{defelipe:topspacesval}).
In \cite{fantini-favre-ruggiero:sandwichselfsim}, the authors classify selfsimilar normal surface singularities, showing that they are exactly the \emph{sandwiched} singularities, i.e., singularities that bimeromorphically dominate $(\nC^2,0)$. These singularities play a central role in the proof of the resolution of singularities via Nash transformations (see, e.g., \cite{spivakovsky:sandwichedsing}).

By putting together \refthm{strictkato} and the classification of selfsimilar singularities, we get the following characterization of normal surface singularities admitting strict selfmaps.
 
\begin{introthm}\label{thm:strictsandwiched}
Let $(X,x_0)$ be a normal surface singularity. Then $(X,x_0)$ admits a strict selfmap if and only if $(X,x_0)$ is a sandwiched singularity.
\end{introthm}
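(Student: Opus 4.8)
The plan is to derive \refthm{strictsandwiched} by combining \refthm{strictkato} with the classification of selfsimilar normal surface singularities of \cite{fantini-favre-ruggiero:sandwichselfsim}, treating the two implications separately. Throughout, recall that a Kato germ is in particular strict, and that by \refthm{strictkato} every strict selfmap of a normal surface singularity \emph{is} a Kato germ; so $(X,x_0)$ admits a strict selfmap if and only if it admits a Kato germ, i.e.\ if and only if it is selfsimilar in the sense of \cite{fantini-favre-ruggiero:sandwichselfsim}.

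\emph{Sandwiched $\Rightarrow$ strict selfmap.} It suffices to produce a Kato germ. Fix a birational morphism $\mu\colon(X,x_0)\to(\nC^2,0)$ realizing the sandwich structure. A sandwiched singularity is encoded by a finite cluster of infinitely near points above $0\in\nC^2$ (equivalently, by a complete ideal primary to the maximal ideal), and this combinatorial datum can be reproduced above a point of the exceptional divisor of a single blow-up of $(\nC^2,0)$. Carrying this through yields a modification $\Kpi\colon X'\to(X,x_0)$, a point $x_1\in\Kpi^{-1}(x_0)$, and an isomorphism $\Ksigma\colon(X,x_0)\to(X',x_1)$, hence a Kato germ $f=\Kpi\circ\Ksigma$. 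This is the easier implication of \cite{fantini-favre-ruggiero:sandwichselfsim}; the regular case $(X,x_0)=(\nC^2,0)$ already illustrates the mechanism, with $f(x,y)=(x,xy)$ visibly equal to $\Kpi\circ\Ksigma$ for $\Kpi$ the blow-up of the origin and $\Ksigma$ the inclusion of a coordinate chart.

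\emph{Strict selfmap $\Rightarrow$ sandwiched.} Let $f\colon(X,x_0)\selfarrow$ be strict. Applying \refthm{strictkato} with $(Y,y_0)=(X,x_0)$ gives a modification $\Kpi\colon X'\to(X,x_0)$, a point $x_1\in\Kpi^{-1}(x_0)$, and a local isomorphism $\Ksigma\colon(X,x_0)\to(X',x_1)$ with $f=\Kpi\circ\Ksigma$; in particular $(X,x_0)\cong(X',x_1)$, so $(X,x_0)$ is selfsimilar, and \cite{fantini-favre-ruggiero:sandwichselfsim} then forces it to be sandwiched. As advertised in the abstract, one can instead argue directly: from the pair $(\Kpi,\Ksigma)$ one assembles the associated Kato surface $S$ --- a compact complex surface obtained by excising a Stein neighbourhood of $x_0$ and regluing the resulting boundary by means of $\Kpi$ and $\Ksigma$ --- inside which the forward orbit $C=\bigcup_{n\ge0}f^{n}(\Kpi^{-1}(x_0))$ of the exceptional locus becomes the maximal compact analytic subset; one then studies the action induced by $f$ on the space of normalized valuations centered at $x_0$, which has a distinguished attracting fixed valuation $v_\star$. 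The crux --- and the step I expect to be the main obstacle --- is to show that $v_\star$ is divisorial and to read off, from its centre on a suitable modification and from the structure of $C$, a birational morphism $(X,x_0)\to(\nC^2,0)$; this exhibits $(X,x_0)$ as sandwiched and finishes the proof.
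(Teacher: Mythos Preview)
Your reduction via \refthm{strictkato} to the selfsimilarity statement of \cite{fantini-favre-ruggiero:sandwichselfsim} is correct, and your treatment of the easy implication (sandwiched $\Rightarrow$ strict) matches the paper's construction in spirit. Citing \cite{fantini-favre-ruggiero:sandwichselfsim} for the converse is also a legitimate route, and the paper explicitly acknowledges it.

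However, your sketch of the direct argument contains a genuine error at the key step: you write that the crux is to show that the eigenvaluation $\nu_\star$ \emph{is} divisorial. It is exactly the opposite. The paper proves (\refthm{nustarnotdivisorial}) that $\nu_\star$ is \emph{never} divisorial for a strict selfmap, and this non-divisoriality is what drives the argument. The mechanism is as follows: fix a good resolution $\mu\colon Z\to(X,x_0)$; since $\nu_\star$ is not divisorial, it lies outside the finite set $\mc{S}_\mu^*$ of divisorial valuations attached to the exceptional primes of $\mu$. By the attraction statement (\refthm{strictvaldyn}), for $n\gg0$ the iterate $f_\bullet^n$ pushes all of $\mc{S}_\mu^*$ into a weak neighbourhood of $\nu_\star$ disjoint from $\mc{S}_\mu^*$, and hence $f_\bullet^n(\NVal[X])=\overline{U_{\Kpi^{(n)}}(x_n)}$ misses $\mc{S}_\mu^*$ (\refprop{selfsimilarifeigenvalnotdiv}). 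By the valuative criterion of properness (\refprop{valcritproperness}), the lift $\Phi_n=\mu^{-1}\circ\Kpi^{(n)}\colon X^{(n)}\dashrightarrow Z$ is then regular at $x_n$, so $(X,x_0)\cong(X^{(n)},x_n)$ dominates the smooth surface $Z$, and $(X,x_0)$ is sandwiched.

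If $\nu_\star$ were divisorial, this would collapse: one could not separate $\nu_\star$ from $\mc{S}_\mu^*$ for resolutions $\mu$ realizing the corresponding prime, and there would be no reason for $\Phi_n$ to become regular. Your suggestion to ``read off a birational morphism $(X,x_0)\to(\nC^2,0)$ from the centre of $\nu_\star$'' does not lead anywhere in that scenario. The Kato surface itself plays only an auxiliary role in the paper's argument; the actual work happens in the tower of iterated Kato data and the valuative dynamics, not in the compact surface $S$.
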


\medskip

In these notes, we will present two proofs of \refthm{strictkato}.
The first, essentially due to Charles Favre, 
is based on local flattening techniques of Hironaka, Lejeune-Jalabert and Teissier \cite{hironaka:introrealanal,hironaka:flattening,hironaka-lejeunejalabert-teissier:platificateurlocal}.
The second 
relies on the geometrical properties of the action $f_\bullet \colon \NVal[X] \to \NVal[Y]$ induced on valuation spaces.

For what concerns \refthm{strictsandwiched}, the proof given in \cite{fantini-favre-ruggiero:sandwichselfsim} works over fields of any characteristic, and relies on extending the plumbing techniques of \cite{spivakovsky:sandwichedsing} when working over fields of positive characteristic, on analytic non-archimedean techniques on Berkovich spaces and non-archimedean links, and the combinatorics of self-similar dual graphs.
Here we present an alternative proof (see also \cite[Section 3.4]{ruggiero:HDR}) that works over $\nC$, which is more geometrical in nature: it relies on the geometry of Kato surfaces (suitable compactifications of the orbit spaces of $f$), as well as on the description portrayed in \cite{gignac-ruggiero:locdynnoninvnormsurfsing} of the dynamics of the action induced by selfmaps on valuation spaces.

\medskip

The material is organized as follows. In \refsec{valspaces} and \refsec{valmap} we recall a few basic facts about the valuation space $\NVal[X]$ associated to a normal surface singularity $(X,x_0)$, and the action $f_\bullet \colon \NVal[X] \to \NVal[Y]$ induced by a dominant germ $f \colon (X,x_0) \to (Y,y_0)$. These two sections can easily be skipped by experts.

In \refsec{topdegstrict}, we introduce the topological degree of a dominant germ $f$, and describe the dynamical properties of $f_\bullet$ when $f$ is a strict selfmap. In particular, we recall Gignac-Ruggiero's result of existence and uniqueness of the eigenvaluation $\nu_\star$.

In \refsec{proofstrictkato}, we give the two proofs of \refthm{strictkato}, and comment on some of the difficulties in higher dimensions.
Finally, in \refsec{Katosandwiched}, we give a proof of \refthm{strictsandwiched} assuming that $\nu_\star$ is not divisorial. We prove this latter statement in \refsec{notdivi}.
We conclude with some explicit examples, and comments on the geometry of Kato surfaces depending on the type of their associated eigenvaluation.

\bigskip

\noindent\textbf{Acknowledgements.}

I would like to thank Charles Favre for sharing his notes around the resolution of strict germs in dimension $2$ and explaining the content to me.
It is after discussing with Bernard Teissier about this topic that he made me realize that flattening techniques were not the natural approach in the singular setting, and soon after I had the proof presented here using valuative dynamics.
I thank him for pointing me, as he often does, towards the right direction.
While working on this project, I wandered more deeply in the world of discrepancies: I would like to thank Hussein Mourtada and André Belotto for insightful discussions around this subject, as well as around the flattening theorem.

This work has been partially supported by the ANR grant ``Sintrop'' ANR-22-CE40-0014; the final drafting of this manuscript happened during my recent visit to Junyi Xie at the BICMR:
I am grateful to the institution and to my host for their hospitality.

\section{Basics on valuation spaces}\label{sec:valspaces}

\subsection{Valuation spaces}

Let $(X,x_0)$ be a normal surface singularity. In this section we will recall a few facts about the associated space $\Val[X]$ of (semi-)valuations.

We denote by $R_X$ the completed local ring $\hat{\mc{O}}_{X,x_0}$ and by $\mf{m}_X$ its unique maximal ideal.

\begin{defi}
A \emph{rank one semivaluation} on $R_X$ is a function $\nu\colon R_X\to \R\cup\{+\infty\}$ satisfying $\nu(\phi\psi) = \nu(\phi) + \nu(\psi)$ and $\nu(\phi + \psi)\geq \min\{\nu(\phi), \nu(\psi)\}$ for all $\phi, \psi\in R_X$.

Such a semivaluation is said to be \emph{centered} if in addition $\nu(\phi)\geq 0$ for all $\phi\in R_X$ and $\nu(\phi)>0$ if and only if $\phi\in \mf{m}_X$. The collection of all rank one centered semivaluations on $R_X$ will be denoted by $\CVal[X]$.
\end{defi}

Equivalently, one may define rank one centered semivaluations on $R_X$ not as functions on $R_X$ itself, but on the set of ideals of $R_X$.
The correspondence between the two points of view is given by setting $\nu(\mf{a})=\min\{\nu(\phi)\ |\ \phi \in \mf{a}\}$ (the minimum exists by noetherianity of $R_X$), or conversely $\nu(\phi)=\nu(\phi R_X)$.
In these terms, centered valuations are determined by the conditions $\nu(R_X)=0$ and $\nu(\mf{m}_X)>0$.

The set $\CVal[X]$ is endowed with the so called \emph{weak topology}, which is the weakest topology for which the evaluations $\nu \mapsto \nu(\phi)$ are continuous for any $\phi \in R_X$.

Notice that for any $\lambda > 0$ and $\nu \in \CVal[X]$, we have that $\lambda\nu \in \CVal[X]$ as well. All these valuations $(\lambda \nu)_{\lambda \in (0,+\infty)}$ are distinct, unless $\nu=\triv_{x_0}$ is the trivial valuation, determined by $\triv_{x_0}(\mf{m}_X)=+\infty$.
These observations lead to the following definitions.

\begin{defi}
A semivaluation $\nu\in \hat{\mc{V}}_X$ is \emph{finite} if $\nu(\mf{m}_X)<+\infty$. We denote by $\FVal[X]=\CVal[X] \setminus \{\triv_{x_0}\}$ the set of all finite semivaluations.
	
We also denote by
\[
\Val[X]:=\nP(\CVal[X]) = \FVal[X]/\sim\text.
\]
the set of finite valuations modulo the equivalence relation $\nu \sim \lambda \nu$ for all $\lambda \in (0,+\infty)$.
\end{defi}

The weak topology on $\CVal[X]$ induces one on $\FVal[X]$ by restriction, and on $\NVal[X]$ by quotient.

In the regular setting $(X,x_0)=(\nC^2,0)$, the space $\NVal=\NVal[\nC^2]$, named the \emph{valuative tree} due to its structure as a $\nR$-tree, has been introduced and intensively studied by Favre and Jonsson (see \cite{favre-jonsson:valtree, favre-jonsson:eigenval, jonsson:berkovich}).
We also refer to \cite{favre:holoselfmapssingratsurf, gignac-ruggiero:locdynnoninvnormsurfsing} for the singular setting.

In most cases, it is often useful to fix a normalization for valuation spaces, i.e., a section of the natural projection $\on{pr}:\FVal[X] \to \Val[X]$.
The most common choice is to take the unique element $\nu$ in an equivalence class satisfying $\nu(\mf{m}_X)=1$.
This allows to naturally identify the space $\Val[X]$ with the space
$
\{\nu \in \FVal[X]\ |\ \nu(\mf{m}_X)=1\}$.

\begin{rem}
Semivaluations can also be interpreted as their multiplicative counterpart, seminorms.
In fact, given a semivaluation $\nu \in \FVal[X]$, we can associate the (non-archimedean) seminorm $\abs{\var}_\nu:=e^{-\nu(\var)}$.
This gives a direct correspondence between valuation spaces and Berkovich spaces (see, e.g.,  \cite{berkovich:book}, \cite[Section 4.5]{favre-jonsson:valtree}, or \cite[Section 1]{dujardin-favre-ruggiero:polyskewprod}).
The four types of seminorms in ($1$-dimensional) Berkovich spaces, points of type I, II, III, and IV, correspond respectively to curve semivaluations, divisorial valuations, irrational valuations and infinitely singular valuations, see the discussion below.

In this language, the set $\Val[X]$ corresponds to the non-archimedean link $\on{NL}(X,x_0)$ associated to the local ring $(R_X,\mf{m}_X)$ over $\nC$ considered with the trivial valuation, see \cite{fantini:normalizedlinks, fantini:normalizedBerkovich, thuillier:homotopy, fantini-favre-ruggiero:sandwichselfsim,gignac-ruggiero:locdynnoninvnormsurfsing}. We also refer to \cite{defelipe:topspacesval} for an interpretation of valuation spaces as largest Hausdorff quotient of local Riemann-Zariski spaces.
\end{rem}

\subsection{Types of valuations}\label{ssec:typesval}

We recall here the geometric description of valuations of a normal surface singularity $(X,x_0)$.

\subsubsection*{Modifications}

First, recall that a \emph{modification} $\pi \colon X_\pi \to (X,x_0)$, is a proper bimeromorphic map that is an isomorphism above $X \setminus \{x_0\}$.
Without further mention, we will always assume that $X_\pi$ is normal.
Among modifications, there are:
\begin{itemize}
\item \emph{resolutions}: $X_\pi$ is regular;
\item \emph{good resolutions}: resolutions for which $\pi^{-1}(x_0)$ has simple normal crossings;
\item \emph{log-resolutions (of $\mf{m}_X$)}: good resolutions for which the ideal sheaf $\mf{m}_X \mc{O}_{X_\pi}$ is locally principal.
\end{itemize}

Given a modification $\pi \colon X_\pi \to (X,x_0)$, we denote by $\dgvert{\pi}$ the set of all $\pi$-exceptional primes, i.e., irreducible components of $\pi^{-1}(x_0)$.
We also denote by
\[
\Ediv[\nK]{\pi}=\left\{\sum_{E \in \dgvert{\pi}} \alpha_E E\ :\ \alpha_E \in \nK\right\}
\] the free module of Weil $\nK$-divisors supported on $\pi^{-1}(x_0)$, where $\nK=\nZ$, $\nQ$ or $\nR$.

\subsubsection{Divisorial valuations}

Given a modification $\pi \colon X_\pi \to (X,x_0)$, the order of vanishing $\ord_E$ along $E$ defines a valuation over $\mc{O}_{X_\pi,\eta}$, where $\eta$ is the generic point of $E$.
Its push-forward $\divi_E := \pi_* \ord_E$, defined by $\phi \mapsto \ord_E(\phi \circ \pi)$, and its positive multiples $\lambda \divi_E$, are called \emph{divisorial valuations}; they correspond to points of type II in the Berkovich language.
The space of divisorial valuations is denoted by $\CVal[X][\mathrm{div}]$, and it is a dense subset of $\CVal[X]$. Similarly, the space of normalized divisorial valuations is denoted by $\NVal[X][\mathrm{div}]$, and the normalized equivalent of $\divi_E$ is denoted by $\nu_E$.

\subsubsection{Quasimonomial valuations}\label{sssec:qmval}
More generally, assume that $\pi \colon X_\pi \to (X,x_0)$ is a good resolution; consider a closed point $p \in \pi^{-1}(x_0)$, that we may assume to be \emph{satellite}: $p \in E \cap F$ for two exceptional primes $E,F \in \dgvert{\pi}$.
Take local coordinates $(z,w)$ at $p$ \emph{adapted} to $\pi^{-1}(x_0)$, i.e., such that $E=\{z=0\}$ and $F=\{w=0\}$.
Given a pair $(r,s)$ of non-negative real numbers (not both zero), we consider the monomial valuation $\mu_{r,s}$ at $p$, defined as
\[
\mu_{r,s}\Big(\sum_{i,j}a_{i,j} z^i w^j \Big) := \min\left\{ri+sj\ |\ a_{i,j} \neq 0\right\}\text{.}
\]
Its push-forward $\nu_{r,s}:=\pi_\star \mu_{r,s}$ is called a \emph{quasimonomial valuation}, and the set of all such valuations is denoted by $\CVal[X][\mathrm{qm}]$.

When $(r,s)$ are positive and rationally dependent, that we may assume to be comprime integers modulo equivalence, then $\nu_{r,s}$ is in fact divisorial, associated to the exceptional prime $E_{\frac{s}{r}}$, obtained as the weighted blow-up of $p$ of weight $(r,s)$.
The extreme cases $(r,s)=(1,0)$ and $(r,s)=(0,1)$ also correspond to divisorial valuations, associated to the exceptional primes $E_0=E$ and $E_\infty = F$ respectively.
When $(r,s)$ are rationally independent, the valuation $\nu_{r,s}$ is called \emph{irrational}, and corresponds to points of type III in the Berkovich language. 

\subsubsection{Curve semivaluations}

Consider a branch $C$ at $x_0$, i.e., a germ of (possibly formal) irreducible curve $(C,x_0) \subset (X,x_0)$.
Then we get a semivaluation by setting
\[
\inte_C(\phi):= C \cdot (\phi=0)
\]
where $\cdot$ stands for the local intersection at $x_0$, i.e., $\dim_\nC \frac{R_X}{I_C + \langle \phi \rangle}$.

Any positive multiple of $\inte_C$ is called a \emph{curve (semi-)valuation}; they correspond to type I points (or to the subclass of rigid points, depending on the precise setting in Berkovich theory, see e.g. \cite[Section 1]{dujardin-favre-ruggiero:polyskewprod}).
The prefix (semi-) alludes to the fact that $\inte_C$ takes the value $+\infty$ not only on $0$, but on the whole ideal $I_C$.
The normalized equivalent of $\inte_C$ is denoted by $\nu_C$.

\subsubsection{Infinitely singular valuations}

The remaining valuations are called \emph{infinitely singular}. In terms of Berkovich geometry, they correspond to points of type IV (or to the non-rigid type I points).
There are several ways to characterize them. One is in terms of certain algebraic invariants (they are the ones with rank and rational rank equal to $1$, and transcendental degree equal to $0$); they are also characterized by having a non-finitely generated value group over $\nZ$.
One can think of them as curve valuations associated to curves of ``infinite multiplicity'' (hence the name).

\subsection{Center of a valuation}

One can also characterize valuations in terms of their associated sequence of infinitely near points, built as follows.
Recall that, given a semivaluation $\nu \in \NVal[X]$ (or more precisely, in the sense of Krull valuations, i.e., valuations with values on an arbitrary totally ordered abelian group), and a modification $\pi \colon X_\pi \to (X,x_0)$, the center $\cen_\pi(\nu)$ is the unique scheme-theoretic point $\xi \in X_{\pi_0}$ such that $\mc{O}_{X_\pi,\xi}$ is dominated by the local ring $(R_\nu,\mf{m}_\nu)$, where $R_\nu =\{\phi \in \Frac(R_X)\ |\ \nu(\phi) \geq 0\}$, and $\mf{m}_\nu=\{\phi \in \Frac(R_X)\ |\ \nu(\phi) > 0\}$ is its unique maximal ideal.

Then, start with any resolution $\pi_0 \colon X_{\pi_0} \to (X,x_0)$, and denote by $p_0=\cen_{\pi_0}(\nu)$ the center of $\nu$ in $X_{\pi_0}$.
If $p_0$ is a closed point, take the blow-up of $X_{\pi_1} \to (X_{\pi_0},p_0)$, and set $p_1 = \cen_{\pi_1}(\nu)$.
In this way, we construct a (possibly finite) sequence of models $\pi_n \colon X_n \to (X,x_0)$ and of infinitely-near points $(p_n)_n$ with $p_n \in \pi_n^{-1}(x_0)$.

The type of a valuation $\nu \in \NVal[X]$ is determined by this sequence $(p_n)_n$, as follows:
\begin{itemize}
\item $\nu$ is divisorial iff the sequence $(p_n)$ is finite;
\item $\nu$ is irrational iff $p_n$ is satellite for any $n \gg 0$;
\item $\nu$ is a curve semivaluation iff $p_n$ is free for any $n \gg 0$;
\item $\nu$ is infinitely singular iff the sequence $(p_n)$ contains infinitely many free points and infinitely many satellite points.
\end{itemize}
Notice that replacing the initial resolution $\pi_0$ with any other changes the sequence of infinitely near points, but preserves the properties listed above.

\begin{rem}\label{rem:exccurve}
To be precise, there are two possible cases when $(p_n)_n$ is a sequence of eventually satellite points.
\begin{itemize}
\item Suppose there exists an exceptional prime $E \in \dgvert{\pi_N}$ for some $N \gg 0$ such that $p_n$ belongs to the strict transform of $E$ for any $n > N$.
In this case $\nu$ is an \emph{exceptional curve valuation}, which is a Krull valuation of rank $2$, not associated to a valuation $\nu \in \NVal[X]$, but rather to a tangent vector at the divisorial valuation $\nu_E$.
\item If there is no such exceptional prime $E$ as above, then $\nu$ is an irrational valuation, of the form $(\pi_N)_* \mu$, with $\mu$ a monomial valuation, and $N$ such that $p_n$ is satellite for any $n > N$.
\end{itemize}
\end{rem}

\begin{rem}
Centers give also a geometrical interpretation of the weak topology on $\NVal[X]$.
Given a modification $\pi \colon X_\pi \to (X,x_0)$, and a point $p \in \pi^{-1}(x_0)$, we denote by $U_\pi(p)$ the set of valuations whose center in $X_\pi$ is $p$.
Then this set is weakly open, and the collection of $U_\pi(p)$, when $\pi$ varies among all possible modifications and $p$ among all closed points in $\pi^{-1}(x_0)$, forms a prebase for the weak topology.
\end{rem}

\subsection{Valuation spaces as projective limits of dual graphs}

Given a good resolution $\pi \colon X_\pi \to (X,x_0)$, we can consider the finite set $\skel[\pi]^*=\{\nu_E\ |\ E \in \dgvert{\pi}\}$ of normalized divisorial valuations associated to the exceptional primes of $\pi$.

Their convex hull $\skel[\pi]$ in $\NVal[X]$, called the \emph{skeleton} with respect to $\pi$, is obtained from $\skel[\pi]^*$ by adding all normalized valuations that are monomial in a satellite point of $X_\pi$.
If moreover $\pi$ is a log-resolution of $\mf{m}_X$, then, for any satellite point $p \in E \cap F$, the segment $[\nu_E,\nu_F]_p$ consists of the monomial valuations $\nu_{r,s}=\pi_* \mu_{r,s}$ at $p$ satisfying $rb_E+sb_F=1$, where $b_E=\ord_{E}(\pi^*\mf{m}_X)$ and $b_F=\ord_{F}(\pi^*\mf{m}_X)$.

We deduce that $\skel[\pi]$ has a natural structure of graph, isomorphic to the dual graph $\dgraph{\pi}$ of the resolution $\pi$.
Given $\pi' \colon X_{\pi'} \to (X,x_0)$ another log-resolution of $\mf{m}_X$ dominating $\pi$, we clearly have that $\skel[\pi] \subseteq \skel[\pi']$.
We also have a natural \emph{retraction map} $\rho_{\pi',\pi} \colon \skel[\pi'] \to \skel[\pi]$, defined as follows.
Set $p=\cen_\pi(\nu)$. If $p$ is a free point, belonging to the exceptional prime $E$, then $\rho_{\pi',\pi}(\nu)=\nu_E$.
If $p \in E \cap F$ is a satellite point, then $\rho_{\pi',\pi}(\nu)=\pi_* \mu_{r,s}$ is the (opportunely renormalized) monomial valuation at $p$ with weights $r=\nu(z)$ and $s=\nu(z)$, where $(z,w)$ are local coordinates at $p$ adapted to $\pi^{-1}(x_0)$.

Since any quasimonomial valuation can be found in a skeleton $\skel[\pi]$ for a suitable good resolution $\pi$, we deduce that $\NVal[\mathrm{qm}]$ can be described as the injective limit of all skeleta.
Similarly, the set of normalized valuations $\NVal[X]$ can be described as the projective limt of these skeleta (with respect to the inverse system of good resolutions and the retraction maps).
We refer to \cite{thuillier:homotopy, jonsson:berkovich} for more details.

\section{Map induced on valuation spaces}\label{sec:valmap}

Any dominant holomorphic germ $f \colon (X,x_0) \to (Y,y_0)$ between two normal surface singularities induces a map $f_* \colon \CVal[X] \to \CVal[Y]$ on centered semivaluations, and consequently a map $f_\bullet \colon \NVal[X] \to \NVal[Y]$ at the level of normalized semivaluations.
In this section, we describe some basic properties of these maps, and refer to \cite{favre-jonsson:eigenval, favre:holoselfmapssingratsurf, gignac-ruggiero:locdynnoninvnormsurfsing} for further details.

\subsection{Definitions and first properties}

Let $f \colon (X,x_0) \to (Y,y_0)$ be a holomorphic germ between two normal surface singularities.

This induces a local ring homomorphism $f^*\colon (R_Y,\mf{m}_Y) \to (R_X, \mf{m}_X)$ by pullback: $f^* \psi := \psi \circ f$.
By duality, we get a map $f_*\colon \CVal[X] \to \CVal[Y]$ acting on centered valuations, defined by $f_*\nu := \nu\circ f^*$. This map $f_*$ is continuous with respect to the weak topologies on $\CVal[X]$ and $\CVal[Y]$ and commutes with scalar multiplication of semivaluations.

If $\nu\in \FVal[X]$ is a finite semivaluation then $f_*\nu$ will again be finite, except in exactly one situation: if $\nu$ is a curve semivaluation associated to a curve germ $(C, x_0)\subset (X, x_0)$ which is contracted by $f$, i.e., $f(C)=y_0$.

We denote by $\VC{f}$ the (finite) set of curve semivaluations $\nu_C$ such that $f(C)=y_0$. The case when $\VC{f}=\emptyset$ is rather special: in this case $f$ is called \emph{finite}.

From the discussion above, it follows that $f_*$ induces a (continuous) map $f_\bullet \colon \NVal[X] \setminus \VC{f} \to \NVal[Y]$, which in fact extends by continuity to a map $f_\bullet \colon \NVal[X] \to \NVal[Y]$.

\begin{prop}\label{prop:propertiesfbullet}
Let $f\colon (X,x_0)\to (Y,y_0)$ be any dominant holomorphic germ between normal surface singularities.
\begin{itemize}
\item
If $\nu\in \FVal[X] \setminus \VC{f}$ is not a contracted curve semivaluation, then $f_\bullet\nu$ is of the same type (curve, divisorial, irrational, infinitely singular) as $\nu$.

\item If $\nu$ is a contracted curve semivaluation, then $f_\bullet\nu$ is divisorial.
\end{itemize}

\end{prop}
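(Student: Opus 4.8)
\emph{First bullet.} I would deduce this from the algebraic characterisation of the four types recalled in \refssec{typesval}. Since $f$ is dominant, $f^*$ is injective and $\Frac(R_X)$ is a finite, hence algebraic, extension of $f^*\Frac(R_Y)$. If $\nu\in\FVal[X]\setminus\VC{f}$ has trivial support --- i.e.\ $\nu$ is divisorial, irrational, or infinitely singular --- then $f_*\nu$ is simply the restriction of $\nu$ to $f^*\Frac(R_Y)$; it still has trivial support, and restriction along an algebraic extension changes neither the rank nor the rational rank of the value group (the quotient of value groups is torsion) nor the transcendence degree of the residue field (the residue extension is algebraic). The triple (rank, rational rank, residue transcendence degree) equals $(1,1,1)$, $(1,2,0)$, $(1,1,0)$ precisely for divisorial, irrational, infinitely singular valuations respectively, so it is preserved together with the type. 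If instead $\nu=\nu_C$ with $f(C)\neq y_0$, then $f(C)$ is a branch at $y_0$, the support of $f_*\nu$ is $(f^*)^{-1}(I_C)=I_{f(C)}$, a height one prime, and so $f_*\nu$ is again a curve semivaluation. In all these cases $f_*\nu$ is finite because $\mf{m}_YR_X$ is $\mf{m}_X$-primary, so $f_\bullet\nu$ is just its renormalisation.

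\emph{Second bullet.} Fix $\nu=\nu_C$ with $f(C)=y_0$. Every generator of $\mf{m}_Y$ pulls back into $I_C$, so $f_*\nu_C=\triv_{y_0}$ and $f_\bullet\nu_C$ is defined only by continuity; the task is to identify it. The plan is to \emph{spread out} the contracted curve: produce modifications $M\colon X'\to(X,x_0)$ and $P\colon Y'\to(Y,y_0)$ and a holomorphic $g\colon X'\to Y'$ with $P\circ g=f\circ M$ such that the strict transform $\tilde C\subset X'$ of $C$ is \emph{not} contracted by $g$. Then $E:=g(\tilde C)$ is a curve with $P(E)=f(C)=y_0$, hence an exceptional prime of $P$, and I claim $f_\bullet\nu_C=\nu_E$, which is divisorial.

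To build $g$, start from $f$ itself and work near a \emph{general} point of the curve, where one has coordinates $(z,w)$ with the curve $=\{w=0\}$, coordinates $(u,v)$ at the image point, and $u\circ g=w^a\cdot(\text{unit})$, $v\circ g=w^b\cdot(\text{unit})$ with $1\le a\le b$ (the coefficients are units since the point is general, and $a,b\ge 1$ since the curve is contracted). Then $(u\circ g,v\circ g)$ is principal there, blowing up the image point lifts $g$, and in the relevant chart the new orders along the curve become $(a,b-a)$: this is the Euclidean algorithm on $(a,b)$, which terminates with second entry $0$, i.e.\ with the curve no longer contracted. Carrying out the corresponding finite sequence of blow-ups of the target, and then a further modification of the source (principalisation, which occurs over the finitely many special points of the curve and so leaves the general point untouched) to recover a genuine morphism $g$, achieves the situation above; enlarging $M$ a little more I may also assume $\tilde C$ meets the exceptional locus of $M$ transversally at a single free point $p$ and that $P$ is a log-resolution of $\mf{m}_Y$.

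To conclude that $f_\bullet\nu_C=\nu_E$, let $q':=g(p)\in E$ and choose $u$ with $E=\{u=0\}$ near $q'$. The monomial valuations $\mu_n$ at $p$ (in coordinates $(z,w)$ with $\tilde C=\{w=0\}$) of weights $(z,w)\mapsto(1,n)$ push forward to divisorial valuations $M_*\mu_n\notin\VC{f}$ whose classes converge to $\nu_C$ in $\NVal[X]$, so $f_\bullet\nu_C=\lim_n[P_*(g_*\mu_n)]$ by continuity. A local computation at $q'$ shows that for $n$ large $g_*\mu_n$ is the monomial valuation at $q'$ of weights $(an+\text{cst},\ \text{cst})$ in $(u,v)$ --- the weight along $E$ growing linearly in $n$, the transverse one bounded; renormalising $P_*(g_*\mu_n)$ by $\mf{m}_Y$ and letting $n\to\infty$, the transverse weight becomes negligible and the limit is the renormalisation of $P_*(\ord_u)=\divi_E$, namely $\nu_E$. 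I expect the step carrying the real content to be the spread-out --- though, once localised at a general point of the curve, it reduces to the Euclidean algorithm --- and the remaining work to be bookkeeping: checking that $g_*\mu_n$ is genuinely monomial for $n\gg 0$, and that a satellite position of $q'$ in $P^{-1}(y_0)$ contributes only a bounded term that does not affect the limit.
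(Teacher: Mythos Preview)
The paper does not prove this proposition: it is stated without argument and the reader is referred to \cite{favre-jonsson:eigenval, favre:holoselfmapssingratsurf, gignac-ruggiero:locdynnoninvnormsurfsing} for details. So there is no ``paper's proof'' to compare against, and your write-up is in fact more than the paper offers. Your first bullet is the standard (and correct) argument: the numerical invariants (rank, rational rank, residue transcendence degree) are preserved under restriction along the finite extension $\Frac(R_X)/f^*\Frac(R_Y)$, and these invariants separate the three honest-valuation types; the curve case is handled by tracking the support. For the second bullet your strategy --- spread out $C$ to a non-contracted $\tilde C$, identify $E=g(\tilde C)$ as an exceptional prime, then approximate $\nu_C$ by monomial valuations and pass to the limit --- is exactly the geometric picture behind \refprop{imageccurve}, which the paper again only cites.

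There is one genuine gap in the spread-out step. You write ``coordinates $(u,v)$ at the image point'' and run the Euclidean algorithm by repeatedly blowing up that point; but the image point is $y_0$, which may be singular, so you have neither two-variable coordinates nor the nice behaviour of point blow-ups that your order-reduction uses. The fix is cheap and you should state it: begin by taking any good resolution $P_0\colon Y_0\to(Y,y_0)$ and principalise on the source to obtain a morphism $g_0\colon X_0\to Y_0$; either the strict transform of $C$ is already non-contracted (and you are done), or it maps to a closed point of the \emph{smooth} surface $Y_0$, where your Euclidean-algorithm argument now applies verbatim. After this adjustment the rest of your argument goes through. Your caution about whether $g_*\mu_n$ is literally monomial is well placed but harmless: you do not need monomiality, only the asymptotics $(g_*\mu_n)(u)=an+O(1)$ and $(g_*\mu_n)(v)=O(1)$, which follow from $\ord_{\tilde C}(g^*u)=a\ge 1$ and $\ord_{\tilde C}(g^*v)=0$; equivalently, $\tfrac{1}{n}\mu_n\to\ord_{\tilde C}$ and $g_*\ord_{\tilde C}=a\,\ord_E$, so after normalising by $\mf m_Y$ the limit is $\nu_E$ regardless of whether $q'$ is free or satellite.
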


\subsection{Geometric interpretation of the action}\label{ssec:geomactionval}

For our purposes, we need more geometrical insight about the action of $f_* \colon \NVal[X] \to \NVal[Y]$ on divisorial valuations and curve semivaluations.

We start with divisorial valuations.

\begin{prop}[{\cite[Prop. 2.5]{favre-jonsson:eigenval}}]\label{prop:imagedivi}
Let $f\colon (X,x_0) \to (Y,y_0)$ be a dominant germ between two normal surface singularities, and let $\nu_E \in \NVal[X]$ be a divisorial valuation.
Denote by $\nu_{E'}:= f_\bullet \nu_E$ its image, which is also divisorial by \refprop{propertiesfbullet}. 
Let $\pi \colon X_{\pi}\to (X,x_0)$ and $\varpi \colon Y_{\varpi}\to (Y,y_0)$ be two good resolutions such that $E \in \dgvert{\pi}$ and $E' \in \dgvert{\varpi}$.

Then the lift $\hat{f} \colon X_\pi \dashrightarrow Y_\varpi$ satisfies $\hat{f}|_{E} (E)=E'$, and
\[
f_* \divi_E = \kk{E} \divi E'\text{,}
\] 
where $\kk{E}$ is the ramification index of $\hat{f}|_E$, i.e., the coefficient of $E$ in the divisor $\hat{f}^*E'$.
\end{prop}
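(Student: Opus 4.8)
The plan is to reduce the statement to a purely local computation at the generic points of $E$ and $E'$, using that divisorial valuations see only the behaviour of $f$ near a curve, and then to identify the ramification index via a pullback of divisors. First I would note that the statement is stable under passing to further blow-ups dominating $\pi$ and $\varpi$: if $\pi'$ dominates $\pi$ and $\varpi'$ dominates $\varpi$, then the lift $\hat f\colon X_{\pi'}\dashrightarrow Y_{\varpi'}$ restricted to $E$ agrees with the old lift after composing with the (birational) contraction maps, and the coefficient of $E$ in $\hat f^* E'$ is a birational invariant of the situation. So I would freely enlarge $\pi$ and $\varpi$ to arrange that $\hat f$ is a \emph{morphism} in a neighbourhood of the generic point $\eta$ of $E$; this is possible because the indeterminacy locus of $\hat f$ is a finite set of closed points, hence misses $\eta$ after blowing up enough. (One should also keep $\pi,\varpi$ good resolutions, which can be done.)

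Once $\hat f$ is a morphism near $\eta$, the image $\hat f(\eta)$ is a point of $Y_\varpi$ whose local ring is dominated by the valuation ring of $\divi_E$; but $\divi_E=\pi_*\ord_E$ pushes forward under $f_*$ to $f_*\divi_E$, and its center in $Y_\varpi$ is exactly $\cen_\varpi(f_*\divi_E)$. Since $f$ is dominant, $f_*\divi_E$ is again divisorial of rank one, hence its center is either $\eta'$, the generic point of some exceptional prime, or a closed point; I would argue that because $E$ is a divisor and $\hat f|_E$ is non-constant (again by dominance — $\hat f$ cannot contract $E$ to a point without $f_*\divi_E$ failing to be a real-valued divisorial valuation, which contradicts \refprop{propertiesfbullet}), the image $\hat f(E)$ is a curve in $Y_\varpi$, necessarily contained in $\varpi^{-1}(y_0)$, so it is (a dense subset of) an exceptional prime $E'$ with $\nu_{E'}=f_\bullet\nu_E$ after the identification forced by the hypothesis $E'\in\dgvert\varpi$. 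This gives $\hat f|_E(E)=E'$.

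It remains to compute $f_*\divi_E$ as a multiple of $\divi_{E'}$. Working in the discrete valuation ring $\mc O_{Y_\varpi,\eta'}$ with uniformizer $t$ (a local equation of $E'$), the pullback $\hat f^* t$ vanishes along $E$ to some order $m\geq 1$, which by definition is the coefficient $\kk{E}$ of $E$ in the Weil divisor $\hat f^* E'$. For any $\phi\in R_Y$ we then have $\ord_E(\hat f^*(\varpi^*\phi)) = m\cdot \ord_{E'}(\varpi^*\phi)$ once we check that the ``horizontal'' part of $\operatorname{div}(\varpi^*\phi)$ contributes nothing to $\ord_E$ — which holds because that horizontal part is the strict transform of $\{\phi=0\}$ and its preimage under $\hat f$, near $\eta$, meets $E$ in at most a proper closed subset, hence does not contain $E$. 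Unwinding the definitions $f_*\divi_E(\phi)=\ord_E(\phi\circ f)=\ord_E(\hat f^*\varpi^*\phi)$ and $\divi_{E'}(\phi)=\ord_{E'}(\varpi^*\phi)$ yields $f_*\divi_E = m\,\divi_{E'} = \kk{E}\,\divi_{E'}$, as desired.

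The main obstacle I anticipate is the bookkeeping in the reduction step: ensuring simultaneously that (i) $\hat f$ becomes a morphism near $\eta$, (ii) $\pi,\varpi$ remain good resolutions with $E\in\dgvert\pi$ and $E'\in\dgvert\varpi$, and (iii) the quantity $\kk{E}$ — a priori depending on the chosen models — is genuinely independent of the choices and matches the coefficient asserted in the statement. Making (iii) precise requires a short argument that blowing up a point of $Y_\varpi$ not lying on $E'$, or blowing up a point of $X_\pi$ not lying on $E$, changes neither $\hat f|_E$ nor the order of vanishing of $\hat f^* t$ along $E$; the case of blowing up a point \emph{on} $E'$ (resp.\ on $E$) needs the observation that it merely subdivides, without altering multiplicities along $E$ itself. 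None of this is deep, but it is where the care must go.
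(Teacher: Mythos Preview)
The paper does not supply its own proof of this proposition: it is stated with a citation to \cite[Prop.~2.5]{favre-jonsson:eigenval} and used as a black box, so there is nothing in the paper to compare your argument against.

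Your proposal is correct and is essentially the standard argument. One small omission in the local computation: when you write $\ord_E(\hat f^*(\varpi^*\phi)) = m\cdot \ord_{E'}(\varpi^*\phi)$ and justify it by saying the \emph{horizontal} part of $\operatorname{div}(\varpi^*\phi)$ contributes nothing along $E$, you should also note that the \emph{other exceptional primes} $E''\in\dgvert{\varpi}\setminus\{E'\}$ contribute nothing either---this holds for the same reason, namely $\hat f(E)=E'$ implies $E\not\subset \hat f^{-1}(E'')$. With that addition the chain of equalities is watertight. Your discussion of the model-independence of $\kk{E}$ is more careful than strictly necessary (the coefficient of $E$ in $\hat f^*E'$ is computed at the generic point of $E$, where all the blow-ups in question are isomorphisms), but it does no harm.
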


If $C$ is a branch at $x_0$, then its image $f(C)$ is either a branch at $y_0$, or is contracted to $y_0$ by $f$.
The next two propositions describe what happens in each case.

\begin{prop}[{\cite[Prop. 2.6]{favre-jonsson:eigenval}}]\label{prop:imageinte}
Let $f \colon (X,x_0) \to (Y,y_0)$ be a dominant germ between normal surface singularities, and let $C$ be a branch at $(X,x_0)$ that is not contracted by $f$.
Then $f(C)=:C'$ is a branch at $(Y,y_0)$, and 
\[
f_* \inte_C = \ee{C} \inte_{C'}\text{,}
\]
where $\ee{C}$ is the local topological degree of $f|_C \colon (C,x_0) \to (C',y_0)$.
\end{prop}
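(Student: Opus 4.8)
The plan is to compute $f_*\inte_C$ by pulling everything back to the normalization of $C$. Fix a normalization $n\colon(\nC,0)\to(C,x_0)\subset(X,x_0)$ with local parameter $t$; by the very definition of the local intersection multiplicity one has $\inte_C(\phi)=\ord_t(\phi\circ n)$ for every $\phi\in R_X$ (in the formal case one replaces $\nC\{t\}$ by $\nC[[t]]$ and uses the formal normalization, the identity being the same). Since $f$ does not contract $C$, the composition $f\circ n\colon(\nC,0)\to(Y,y_0)$ is non-constant, hence proper with finite fibres onto its image, and that image is an irreducible germ of (analytic, or formal) curve $C':=f(C)$ at $y_0$ — note $y_0=f(x_0)\in C'$. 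This already establishes the first assertion. Moreover, by the universal property of normalization, $f\circ n$ factors through the normalization $n'\colon(\nC,0)\to(C',y_0)$ with parameter $s$: there is a germ $m\colon(\nC,0)\to(\nC,0)$ with $f\circ n=n'\circ m$.

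Next I would identify the degree of $m$. Writing $m(t)=u(t)\,t^d$ with $u$ a unit and $d=\ord_t m\ge 1$, the germ $m$ is a $d$-sheeted branched cover of a punctured disk; since $n$ and $n'$ are generically bijective, $d$ is exactly the number of preimages near $x_0$ of a generic point of $C'$ close to $y_0$ under $f|_C$, i.e.\ $d=\ee{C}$ is the local topological degree of $f|_C\colon(C,x_0)\to(C',y_0)$.

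Finally, for any $\psi\in R_Y$ one computes
$f_*\inte_C(\psi)=\inte_C(\psi\circ f)=\ord_t\big((\psi\circ n')\circ m\big)=\ord_t(m)\cdot\ord_s(\psi\circ n')=\ee{C}\,\inte_{C'}(\psi)$,
where the penultimate equality is the elementary fact that substituting an order-$d$ germ multiplies orders of vanishing by $d$. Hence $f_*\inte_C=\ee{C}\,\inte_{C'}$, as claimed.

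The argument is essentially bookkeeping; the two points requiring care are: (i) verifying that $f(C)$ is genuinely a curve germ and that $f|_C$ factors through the two normalizations — both standard once one knows $f\circ n$ is non-constant; and (ii) the identity $\inte_C(\phi)=\ord_t(\phi\circ n)$, which rests on the fact that passing to the normalization alters the $\nC$-colength of a principal ideal generated by a non-zero-divisor only by the finite $\delta$-invariant, a quantity that cancels in the comparison — I would invoke this rather than reprove it. The conceptual heart is the identification in the second paragraph of $\ord_t(m)$ with the local topological degree of $f|_C$, and this is immediate from the factorization $f\circ n=n'\circ m$.
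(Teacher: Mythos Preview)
The paper does not give its own proof of this proposition; it is stated with a citation to \cite[Prop.~2.6]{favre-jonsson:eigenval} and used as a black box. Your argument via normalization is correct and is essentially the standard one (and the one in the cited reference): pull back to uniformizers on the normalizations of $C$ and $C'$, identify $\ee{C}$ with the order of the induced map $m$ on parameters, and use multiplicativity of $\ord_t$ under substitution. Your remark (ii) about the $\delta$-invariant cancelling in the identity $\inte_C(\phi)=\ord_t(\phi\circ n)$ is exactly the right justification, so nothing is missing.
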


\begin{prop}[{\cite[Prop. 2.7]{favre-jonsson:eigenval}}]\label{prop:imageccurve}
Let $f \colon (X,x_0) \to (Y,y_0)$ be a dominant germ between normal surface singularities, and let $C$ be a branch at $(X,x_0)$ such that $f(C)=y_0$.
By \refprop{propertiesfbullet}, its image $f_\bullet \nu_C = \nu_G$ is divisorial.
Then, for any good resolution $\varpi \colon Y_\varpi \to (Y,y_0)$ such that $G \in \dgvert{\varpi}$, the lift $\hat{f} \colon (X,x_0) \to Y_\pi$ is such that $\hat{f}|_C \colon (C,x_0) \to (G,q)$ for a suitable point $q \in G$.
\end{prop}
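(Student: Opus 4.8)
The plan is to fix a good resolution $\varpi\colon Y_\varpi\to(Y,y_0)$ with $G\in\dgvert{\varpi}$, resolve the indeterminacy of $\varpi^{-1}\circ f$, and then locate the image of the strict transform of $C$ by combining the continuity of $f_\bullet$ with the description of centers near $\nu_G$. Concretely: by resolution of indeterminacies (blowing up finitely many points of $X$, and normalising, or first resolving $X$) one produces a modification $\mu\colon X'\to X$ for which $\hat f:=\varpi^{-1}\circ f\circ\mu\colon X'\to Y_\varpi$ is a morphism; let $C'\subset X'$ be the strict transform of $C$ and $x_0':=\cen_{X'}(\inte_C)\in\mu^{-1}(x_0)$ its base point, so that $C'$ is a branch of $X'$ at $x_0'$. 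Since $f(C)=y_0$ we get $\varpi(\hat f(C'))=f(\mu(C'))=\{y_0\}$, hence $\hat f(C')\subseteq\varpi^{-1}(y_0)$; being irreducible, $\hat f(C')$ is either a single point or a single exceptional prime $E'\in\dgvert{\varpi}$. Setting $q:=\hat f(x_0')$, the goal is to prove that in both cases $\hat f(C')$ lies on $G$ and $q\in G$.

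The heart of the argument is the claim $q\in G$. I would pick branches $C_n$ at $x_0$, \emph{not} contracted by $f$, with $\nu_{C_n}\to\nu_C$ in $\NVal[X]$ — e.g.\ smooth curves sharing with $C$ their first $n$ infinitely near points. Then $\nu_{C_n}\notin\VC{f}$, so $f_\bullet\nu_{C_n}=\nu_{f(C_n)}$ by \refprop{imageinte}, and by continuity of $f_\bullet$ one has $\nu_{f(C_n)}\to f_\bullet\nu_C=\nu_G$. Using the retraction to $\skel[\varpi]$ and the local structure of this skeleton at $\nu_G$, the set of normalised valuations whose center in $Y_\varpi$ lies on $G$ is a weak neighbourhood of $\nu_G$; hence $\cen_\varpi(\inte_{f(C_n)})\in G$ for $n\gg0$. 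On the other hand $\cen_\varpi(\inte_{f(C_n)})$ is the base point in $Y_\varpi$ of the strict transform of $f(C_n)$, which — since $C_n$ is not contracted — equals $\hat f(C_n')$ (with $C_n'$ the strict transform of $C_n$ in $X'$), so this base point is $\hat f(p_n)$ with $p_n$ the base point of $C_n'$. As $\mu$ extracts only finitely many infinitely near points of $x_0$ while $C_n$ shares with $C$ its first $n$ infinitely near points, we have $p_n=x_0'$ for $n\gg0$; therefore $q=\hat f(x_0')=\cen_\varpi(\inte_{f(C_n)})\in G$.

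To finish: if $\hat f(C')$ is a point it must equal $q$ (since $x_0'\in C'$), so $\hat f|_C\colon(C,x_0)\to(G,q)$ is the constant map to $q\in G$. If instead $\hat f(C')=E'\in\dgvert{\varpi}$, then $\hat f$ does not contract $C'$, so by \refprop{imageinte} $\hat f_\bullet\nu_{C'}=\nu_\beta$, where $\beta$ is the branch $\hat f(C')$ of $E'$ at $q$. Since $\beta\subseteq E'$ and $E'$ is $\varpi$-exceptional, $\beta$ is contracted by $\varpi$ and $\varpi_\bullet\nu_\beta=\nu_{E'}$ (approximate $\beta$ by non-exceptional branches, or use the exceptional curve valuations of \refrem{exccurve}). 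Combining this with the functoriality $\varpi_\bullet\circ\hat f_\bullet=f_\bullet\circ\mu_\bullet$ (which holds on the dense set of non-contracted valuations, hence everywhere by continuity) and with $\mu_\bullet\nu_{C'}=\nu_C$, one obtains
\[
\nu_{E'}=\varpi_\bullet\nu_\beta=\varpi_\bullet\hat f_\bullet\nu_{C'}=f_\bullet\mu_\bullet\nu_{C'}=f_\bullet\nu_C=\nu_G\text{,}
\]
whence $E'=G$, and again $\hat f|_C$ maps into $G$ through $q\in G$.

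I expect the main obstacle to be this second, non-constant case — more precisely the identity $\varpi_\bullet\nu_\beta=\nu_{E'}$, which forces one to handle the rank-two exceptional curve valuations of \refrem{exccurve} and to track carefully the changing normalisations when passing between valuation spaces on $Y_\varpi$ and on $Y$. The auxiliary fact that ``having center on $G$'' defines a neighbourhood of $\nu_G$ also requires a short verification via the retraction maps; everything else is a routine combination of the continuity of $f_\bullet$ with the description of centers and of infinitely near points.
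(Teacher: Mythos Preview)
The paper does not give its own proof of this proposition: it is simply quoted from \cite[Prop.~2.7]{favre-jonsson:eigenval}. So there is nothing to compare against, and I will only comment on your argument.

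Your outline is correct and the key ideas are the right ones. Two points deserve a word.

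First, in the non-constant case you invoke \refprop{imageinte} for the germ $\hat f\colon (X',x_0')\to (Y_\varpi,q)$; for this you need that germ to be \emph{dominant}. This is true but not entirely automatic: it follows because $\hat f$ is globally dominant and $X'$ is irreducible, so $\hat f$ cannot map any nonempty open subset into a proper analytic subset of $Y_\varpi$. It is worth stating this explicitly, since otherwise $\hat f_\bullet$ is not defined.

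Second, your anticipated difficulty, the identity $\varpi_\bullet\nu_\beta=\nu_{E'}$ for a branch $\beta\subset E'$ at $q$, is indeed the crux, but it admits a direct computation that avoids exceptional curve valuations: take local coordinates $(u,v)$ at $q$ with $E'=\{u=0\}$, approximate $\beta$ by $\beta_n=\{u=v^n\}$, and compute $\nu_{\varpi(\beta_n)}(\phi)=\dfrac{n\,\ord_{E'}(\varpi^*\phi)+O(1)}{n\,\ord_{E'}(\varpi^*\mf m_Y)+O(1)}\to\nu_{E'}(\phi)$. This is shorter than what you sketch. Alternatively, one can bypass the functoriality argument altogether: once you know $q\in G$, if $\hat f(C')$ were a branch of some $E'\neq G$ then $q\in G\cap E'$; blowing up $q$ separates $G$ and $E'$, and rerunning your step~2 for this larger resolution forces the new base point to lie on $G$ while the strict transform of $\hat f(C')$ meets the new exceptional divisor only at $E'\cap H\notin G$, a contradiction.

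A minor quibble: in the singular setting there may be no smooth curves through $x_0$; simply take branches $C_n\notin\VC{f}$ sharing with $C$ its first $n$ infinitely near points.
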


\begin{rem}
In terms of Krull valuations, i.e., if we look at the action induced on the Riemann-Zariski spaces, we have that the image of a contracted curve valuation $\nu_C$ is an exceptional curve valuation (see \refrem{exccurve}).
\end{rem}

\subsection{Detecting indeterminacy points}

The map $f_\bullet \colon \NVal[X] \to \NVal[Y]$ also allows to detect the action and indeterminacy points of the lift $\hat{f} \colon X_\pi \dashrightarrow Y_\varpi$ of a dominant germ $f \colon (X,x_0) \to (Y,y_0)$ for any modifications $\pi\colon X_\pi\to (X,x_0)$ and $\varpi\colon Y_\varpi\to (Y,y_0)$.

Recall that for any closed point $p\in \pi^{-1}(x_0)$, then $U_\pi(p)\subset \NVal[X]$ stands for the weak open subset of semivaluations whose center in $X_\pi$ is $p$. 
If $\hat{f}$ is holomorphic at a closed point $p\in \pi^{-1}(x_0)$, and if $q = \hat{f}(p)$, then $f_\bullet(U_\pi(p))\subseteq U_\varpi(q)$; this is simply a matter of unraveling definitions. As a consequence of the valuative criterion of properness (we use here the normality of the spaces $X_\pi$ and $Y_\varpi$), the converse is also true (see, e.g., \cite[Prop.\ 3.2]{favre-jonsson:eigenval} or \cite[Proposition 4.12]{gignac-ruggiero:locdynnoninvnormsurfsing}).

\begin{prop}\label{prop:valcritproperness}
Let $f\colon (X,x_0) \to (Y,y_0)$ be a holomorphic germ between normal surface singularities.
Let $\pi:X_\pi \to (X,x_0)$ and $\varpi:Y_\varpi \to (Y,y_0)$ be two modifications (with $X_\pi$ and $Y_\varpi$ normal).
Then the map $\hat{f}=\varpi^{-1} \circ f \circ \pi$ is holomorphic at a point $p \in \pi^{-1}(x_0)$ if and only if there exists a point $q \in \varpi^{-1}(y_0)$ so that $f_\bullet U_\pi(p) \subseteq U_\varpi(q)$.
In this case, $\hat{f}(p)=q$.
\end{prop}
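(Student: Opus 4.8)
The statement is essentially the valuative criterion of properness, packaged for the weak topology on normalized valuation spaces, so the plan is to reduce to that criterion by translating the hypotheses into statements about (Krull) valuations and their centers. The first direction is the easy one: suppose $\hat f$ is holomorphic at $p$, and set $q=\hat f(p)$. Then for any $\nu\in U_\pi(p)$, I would lift $\nu$ to an honest (Krull) valuation $\tilde\nu$ on $\Frac(R_X)$ whose center in $X_\pi$ is $p$; since $f\circ\pi$ factors as $\varpi\circ\hat f$ near $p$, functoriality of centers under morphisms gives $\cen_\varpi(f_*\tilde\nu)=\hat f(p)=q$, hence $f_\bullet\nu\in U_\varpi(q)$. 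Unraveling the definition of $f_\bullet$ (and handling the contracted-curve locus $\VC f$ by the continuous extension, using \refprop{imageccurve}) makes this rigorous.

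**The substantial direction.** Conversely, assume $f_\bullet U_\pi(p)\subseteq U_\varpi(q)$ for some $q\in\varpi^{-1}(y_0)$. I want to produce a holomorphic extension of $\hat f$ at $p$. Work in a common normal modification $Z\to X_\pi$ resolving the indeterminacy of $\hat f$, with morphisms $g\colon Z\to Y_\varpi$ and $h\colon Z\to X_\pi$; let $W=h^{-1}(p)$ be the (possibly empty, possibly positive-dimensional) fibre. If $\hat f$ were \emph{not} holomorphic at $p$, then $h$ would contract a curve $W$ over $p$, and since $g$ is not constant on $W$ (otherwise $\hat f$ \emph{is} defined at $p$ with value $g(W)$, contradiction — here one uses normality of $X_\pi$ to rule out the isolated-indeterminacy subtlety), the image $g(W)\subset\varpi^{-1}(y_0)$ is a curve. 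Pick two distinct divisorial valuations $\nu_1,\nu_2$ realized by exceptional primes of a modification dominating $Z$, whose centers in $Z$ lie on $W$ but whose centers in $Y_\varpi$ land on distinct points $q_1\ne q_2$ of $g(W)$ — concretely, take centers at two general points of $W$. Both have center $p$ in $X_\pi$, so both lie in $U_\pi(p)$; but by \refprop{imagedivi} (applied to the lift $g$, identifying $f_*\divi_E$ with the push-forward along $g$ of $\ord_E$) their images $f_\bullet\nu_i$ have centers $q_i$ in $Y_\varpi$, so they cannot both lie in a single $U_\varpi(q)$. This contradicts the hypothesis, so $\hat f$ is holomorphic at $p$; and then the first direction forces $\hat f(p)=q$.

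**The main obstacle.** The delicate point is the geometry at $p$ when the indeterminacy is \emph{not} resolved by a single blow-up and $W=h^{-1}(p)$ is a genuine curve: I must be sure that $g|_W$ is non-constant (so that its image is a curve and I can separate two points of it by divisorial valuations centered on $W$), and that the divisorial valuations I choose on $Z$ really do have center exactly $p$ in $X_\pi$ and centers $q_1\neq q_2$ in $Y_\varpi$. This is where normality of $X_\pi$ and $Y_\varpi$ is used — it guarantees (van der Waerden purity / the usual form of the valuative criterion) that an indeterminacy point of a bimeromorphic-type map between normal surfaces genuinely blows up to a curve rather than hiding as an isolated point, and it guarantees that centers are well-defined. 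An alternative, cleaner route that avoids choosing a common resolution is to invoke the valuative criterion of properness directly: the map $\hat f$ is a meromorphic map between normal varieties, $\varpi$ is proper, and the condition $f_\bullet U_\pi(p)\subseteq U_\varpi(q)$ says precisely that every valuation centered at $p$ pushes to one centered at $q$; by the valuative criterion this lifts $\hat f$ through $\varpi$ over a neighbourhood of $p$, i.e.\ extends it holomorphically, as in \cite[Prop.\ 3.2]{favre-jonsson:eigenval} or \cite[Proposition 4.12]{gignac-ruggiero:locdynnoninvnormsurfsing}. I would present the argument in the second form, citing those references for the criterion and spelling out only the translation between "centers of all valuations over $p$" and the weak-open sets $U_\pi(p)$, $U_\varpi(q)$.
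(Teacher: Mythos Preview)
Your proposal is correct, and in fact the paper gives no detailed proof at all: it states that the forward direction is ``simply a matter of unraveling definitions'' and that the converse follows from the valuative criterion of properness together with the normality of $X_\pi$ and $Y_\varpi$, citing exactly the two references you mention (\cite[Prop.\ 3.2]{favre-jonsson:eigenval} and \cite[Proposition 4.12]{gignac-ruggiero:locdynnoninvnormsurfsing}). So your ``second form'' is precisely the paper's approach.

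Your first, geometric argument (resolve the indeterminacy by $h\colon Z\to X_\pi$, $g\colon Z\to Y_\varpi$, observe that if $\hat f$ were indeterminate at $p$ then $g$ would be non-constant on the curve $W=h^{-1}(p)$, and separate two points of $g(W)$ by divisorial valuations centered on $W$) is a genuine alternative that the paper does not spell out. It has the virtue of being self-contained and of making transparent where normality enters: once to ensure $h^{-1}(p)$ is a curve when $p$ is an indeterminacy point, and once to ensure that $g$ constant on $W$ would force $\hat f$ to extend (via $h_*\mc{O}_Z=\mc{O}_{X_\pi}$). The trade-off is that the valuative-criterion route is shorter and generalizes immediately beyond surfaces, whereas your geometric argument is specific to dimension two (the fibre $W$ is a curve, so one can pick two closed points with distinct images). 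Either is perfectly acceptable here; since you plan to present the second form, your write-up will match the paper's.
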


\subsection{Log-discrepancy}

The log-discrepancy $\Kld_X(\nu)$ of a valuation $\nu \in \FVal[X]$ is a quantity that measures the positivity properties of the canonical bundle $K_X$ along $\nu$, and it is a fundamental tool in birational geometry to measure how bad a singularity is (see, e.g., \cite{kollar-mori:biratgeomalgvar,kollar:singMMP}). It is defined as follows.

Fix a nontrivial holomorphic $2$-form $\omega$ on $(X,x_0)$; the vanishing of $\omega$ defines a Weil divisor on $X$ that we denote by $\Div(\omega)$.
Given a resolution $\pi\colon X_\pi\to (X, x_0)$, the \emph{relative canonical divisor} of $\pi$ is the divisor $K_\pi\in \Ediv[\nQ]{\pi}$ defined by the equality $\Div(\pi^*\omega) = \pi^*\Div(\omega) + K_\pi$, where $\pi^*\Div(\omega)$ refers to the Mumford pull-back of $\Div(\omega)$ (see,  e.g., \cite[p.\ 195]{matsuki:mori-program}). The relative canonical divisor $K_\pi$ does not depend on the choice of $\omega$.

The \emph{log-discrepancy} of an exceptional prime $E \in \dgvert{\pi}$ is given by $\Kld_X(E):=1+\ord_E K_\pi$.
\begin{rem}
Besides a direct computation of log-discrepancies, one can compute them via adjunction formula, as follows.

Fix a good resolution $\pi \colon X_\pi \to (X,x_0)$, and label its exceptional primes $E_1, \ldots, E_n$.
We set $\Kld_i:=\Kld(E_i)$, and denote by $\Kld=(\Kld_i)_i$ the vector of log-discrepancies of the exceptional primes in $\dgvert{\pi}$.
The adjunction formula for $X_\pi$ gives
\begin{equation}\label{eqn:adjunction_can_div}
2g(E)-2 = K_{E} = (K_\pi + E) \cdot E\text{.}
\end{equation}
for every $E \in \dgvert{\pi}$,
where here $g(E)$ denotes the genus of $E$, and $K_E$ denotes the canonical bundle of $E$.
By applying \refeqn{adjunction_can_div} to $E_j$ for all $j=1,\ldots, n$, we get
\[
2g(E_j)-2 = \sum_{i} \Kld_i E_i \cdot E_j - \sum_{i \neq j} E_i \cdot E_j = (M\Kld)_j - s(E_j)\text{,} 
\]
where $s(E_j)$ is the degree of $E_j$ in the graph $\dgraph{\pi}$, and $M$ is the intersection matrix on $\Ediv{\pi}$.
Hence, we can compute the vector of log-discrepancies by the formula
\begin{equation}\label{eqn:computeldadj}
\Kld=M^{-1}(2g-2+s)\text{,}
\end{equation}
where $2g-2+s$ denotes the vector of entries $2g(E_j)-2+s(E_j)$.
\end{rem}

If $\pi'$ is another good resolution dominating $\pi$, i.e., the map $\eta=\pi^{-1} \circ \pi' \colon X_{\pi'} \to X_\pi$ is regular, then we have $\eta_* K_{\pi'} = K_\pi$.
We deduce that the log-discrepancy $\Kld_X(E)$ does not depend on the choice of the model $\pi$ realizing $E$: it is a function of the associated valuation $\ord_E$. 
Log-discrepancy is then defined by homogeneity to all divisorial valuations $\lambda \ord_{E}$, by setting $\Kld(\lambda \ord_{E}):=\lambda \Kld(\ord_{E})$ for any $\lambda > 0$, any good resolution $\pi$ and any exceptional prime $E \in \dgvert{\pi}$.

A very convenient property of log-discrepancies is that they behave well on monomial valuations, in the following sense.
Assume we are in the situation of \refsssec{qmval}, then for any rationally-dependent weights $(r,s)$, we have that $\nu_{r,s}$ is divisorial, and a direct computation yields
\begin{equation}\label{eqn:logdiscrepancywblowup}
\Kld(\nu_{r,s}) = r \Kld(E_0) + s \Kld(E_1)\text{.}
\end{equation}
One can then use \refeqn{logdiscrepancywblowup} to extend the log-discrepancy to a continuous functional on the space $\CVal[X][\mathrm{qm}]$ of quasimonomial valuations, and extend it further to a lower semi-continuous functional $\Kld_X \colon \FVal[X] \to (-\infty,+\infty]$ defined on finite semivaluations (see \cite[Proposition 1.6]{favre:holoselfmapssingratsurf}).

\subsection{Jacobian formula}\label{ssec:jacobianformula}

The Jacobian formula allows to compare the log-discrepancies of a valuation and its image by a dominant map.
A proof of this result can be found in \cite[Section 4.5]{gignac-ruggiero:locdynnoninvnormsurfsing}, see also \cite[Proposition 1.9]{favre:holoselfmapssingratsurf} for the finite case.

\begin{prop}[The Jacobian Formula]\label{prop:jacobian_formula}
Let $(X,x_0)$ and $(Y,y_0)$ be two normal surface singularities, and let $f\colon(X,x_0) \to (Y,y_0)$ be a dominant holomorphic map.
Then for any $\nu \in \FVal[X]$, we have
\begin{equation}\label{eqn:jacobian_formula}
\Kld_Y(f_*\nu)= \Kld_X(\nu) + \nu(\JD{f})\text{,}
\end{equation}
where $\JD{f}$, called the \emph{Jacobian divisor} of $f$, is the Weil divisor on $X$ defined by
\begin{equation}\label{eqn:jacobian_divisor}
\JD{f} = \on{Div}(f^*\omega) - f^* \on{Div}(\omega)\text{,}
\end{equation}
with $\omega$ any holomorphic $2$-form on $(Y,y_0)$. 
\end{prop}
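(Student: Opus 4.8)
The plan is to prove the identity first for divisorial valuations by a direct computation with relative canonical divisors, and then to extend it to all of $\FVal[X]$ by density together with standard (semi)continuity properties of the log-discrepancy.

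\emph{The divisorial case.} Take $\nu=\divi_E$ for $E$ an exceptional prime of a good resolution $\pi\colon X_\pi\to(X,x_0)$, and let $\divi_{E'}$ be proportional to $f_*\divi_E$, which is again divisorial by \refprop{propertiesfbullet}. Fix a good resolution $\varpi\colon Y_\varpi\to(Y,y_0)$ with $E'\in\dgvert{\varpi}$; after replacing $\pi$ by a higher resolution we may assume the lift $\hat f\colon X_\pi\to Y_\varpi$ is a morphism, which changes neither $\divi_E$, nor $\Kld_X(E)$, nor $\divi_E(\JD f)$, as these depend only on the valuation. By \refprop{imagedivi} we have $\hat f(E)=E'$ and $f_*\divi_E=\kk E\,\divi_{E'}$, where $\kk E$ is the ramification index of $\hat f$ along $E$. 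Fix a nonzero $2$-form $\omega$ on $(Y,y_0)$ and set $\omega_X:=f^*\omega$, which is nonzero as $f$ is dominant, so that $\on{Div}(\omega_X)=\on{Div}(f^*\omega)$; functoriality of the pullback of forms along $f\circ\pi=\varpi\circ\hat f$ gives $\pi^*\omega_X=\hat f^*\varpi^*\omega$. Computing $\on{Div}(\pi^*\omega_X)$ in two ways --- once through the definition of $K_\pi$ together with $\on{Div}(f^*\omega)=\JD f+f^*\on{Div}(\omega)$ and the compatibility $\pi^*\circ f^*=\hat f^*\circ\varpi^*$ of Mumford pullbacks, and once through the definition of $K_\varpi$ together with the ramification divisor $R_{\hat f}$ of the generically finite morphism $\hat f$ between smooth surfaces --- and cancelling the common term $\hat f^*\varpi^*\on{Div}(\omega)$, one obtains the identity of $\nQ$-divisors on $X_\pi$
\[
\hat f^* K_\varpi + R_{\hat f} \;=\; \pi^*\JD f + K_\pi .
\]
Taking $\ord_E$ and using $\ord_E(\hat f^* K_\varpi)=\kk E\,\ord_{E'}(K_\varpi)=\kk E\,(\Kld_Y(E')-1)$ (since $E$ dominates $E'$ with ramification index $\kk E$ and maps into no other $\varpi$-exceptional prime), $\ord_E R_{\hat f}=\kk E-1$ (tame ramification, as we work over $\nC$), and $\ord_E(\pi^*\JD f)=\divi_E(\JD f)$, the terms regroup into $\kk E\,\Kld_Y(E')=\Kld_X(E)+\divi_E(\JD f)$. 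By homogeneity of $\Kld_Y$ and \refprop{imagedivi}, the left-hand side is $\Kld_Y(f_*\divi_E)$, giving \refeqn{jacobian_formula} for $\nu=\divi_E$, and hence, by homogeneity, for every positive multiple of $\divi_E$.

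\emph{Extension to all semivaluations.} Both sides of \refeqn{jacobian_formula} are lower semicontinuous on $\FVal[X]$ (the right-hand side because $\Kld_X$ is lower semicontinuous and $\nu\mapsto\nu(\JD f)$ is continuous; the left-hand side as the composition of the lower semicontinuous functional $\Kld_Y$ with the continuous map $f_*$), they restrict to continuous functionals on the dense subset $\CVal[X][\mathrm{qm}]$ of quasimonomial valuations (using \refeqn{logdiscrepancywblowup} for $\Kld$, and affineness of $\nu\mapsto\nu(\JD f)$ along the segments of each skeleton), and they agree on divisorial valuations, which are dense in $\CVal[X][\mathrm{qm}]$; hence they agree on all of $\CVal[X][\mathrm{qm}]$. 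Finally, any $\nu\in\FVal[X]$ is the limit of an increasing sequence $\mu_n\nearrow\nu$ of quasimonomial valuations with $\Kld_X(\mu_n)\to\Kld_X(\nu)$; since $f_*$ is continuous and order-preserving, $f_*\mu_n\nearrow f_*\nu$ and $\Kld_Y(f_*\mu_n)\to\Kld_Y(f_*\nu)$ by lower semicontinuity and monotonicity of $\Kld_Y$, while $\mu_n(\JD f)\to\nu(\JD f)$, so the quasimonomial identity passes to the limit.

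The heart of the argument, and the step where care is needed, is the divisorial case --- specifically the bookkeeping of Mumford pull-backs of \emph{Weil} divisors on the singular germs: one must verify the compatibility $\pi^*\circ f^*=\hat f^*\circ\varpi^*$ and that $\ord_E(\pi^*\JD f)$ indeed computes $\divi_E(\JD f)$, since $\JD f$ need not be $\nQ$-Cartier on $X$. The remaining inputs --- the Riemann--Hurwitz contribution $\ord_E R_{\hat f}=\kk E-1$ and the (semi)continuity and approximation facts used in the extension step --- are standard over $\nC$.
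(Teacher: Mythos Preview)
The paper does not give its own proof of this proposition: it simply refers the reader to \cite[Section 4.5]{gignac-ruggiero:locdynnoninvnormsurfsing} (and to \cite[Proposition 1.9]{favre:holoselfmapssingratsurf} for the finite case). Your strategy --- prove the identity for divisorial valuations by comparing relative canonical divisors on a common resolution, then extend by density and (semi)continuity --- is the standard one followed in those references, and your divisorial computation (leading to $\hat f^* K_\varpi + R_{\hat f} = \pi^*\JD f + K_\pi$ and the regrouping into $\kk E\,\Kld_Y(E')=\Kld_X(E)+\divi_E(\JD f)$) is correct. You are also right to flag the compatibility of Mumford pullbacks and the meaning of $\divi_E(\JD f)$ as the delicate points.

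One genuine soft spot is in your extension step. You assert that $\Kld_Y(f_*\mu_n)\to\Kld_Y(f_*\nu)$ ``by lower semicontinuity and monotonicity of $\Kld_Y$'', but the paper only records lower semicontinuity of $\Kld_Y$, not monotonicity with respect to the pointwise order on valuations, and the sequence $f_*\mu_n$ is in general \emph{not} the sequence of retractions of $f_*\nu$, so the standard fact that log-discrepancy is realized as the increasing limit along retractions does not apply directly on the $Y$-side. Lower semicontinuity alone only yields the inequality $\Kld_Y(f_*\nu)\le\Kld_X(\nu)+\nu(\JD f)$; for the reverse inequality one needs an extra argument (for instance, showing that for a cofinal system of resolutions $\pi$ the valuation $f_*r_\pi(\nu)$ dominates the retraction $r_{\varpi}(f_*\nu)$ for a suitable $\varpi$, which is how the cited reference proceeds). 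You should also say a word about the contracted curve semivaluations $\nu_C\in\VC f$, for which $f_*\nu_C\notin\FVal[Y]$ and the formula is to be read as $+\infty=+\infty$.
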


When $(X,x_0) \cong (\nC^2,0)$, then one can take $\omega = \dd x \wedge \dd y$, where $(x,y)$ are local coordinates at $0$. Then $\JD{f}$ is the divisor associated to the jacobian determinant of $f$. In particular, $\JD{f}$ is effective and non-trivial in this case, and $\nu(\JD{f})>0$ for any $\nu \in \FVal[X]$.

Similarly, for finite germs $f \colon (X,x_0) \to (Y,y_0)$, the divisor $\JD{f}$ is effective, even though it might be trivial (see \cite[Theorem B]{favre:holoselfmapssingratsurf}).
In fact, at any point $x \in X \setminus \{x_0\}$, the germ $f \colon (X,x) \to (Y,f(x))$ is between two regular surfaces, and again $\JD{f}$ is locally described at $x$ as the jacobian determinant of $f$.

One can also show that $\JD{f}$ is effective whenever $(Y,y_0)$ is canonical (i.e., the discrepancies $\Kd(E):=\Kld(E)-1$ of all exceptional primes are $\geq 0$), see \cite[Proposition 4.33]{gignac-ruggiero:locdynnoninvnormsurfsing}.

In general however, $\JD{f}$ is not effective, and the difference
$
\Kld_Y(f_*\nu)- \Kld_X(\nu) 
$
might be $\leq 0$.
It is rather simple to find examples of this phenomenon, for example by taking $f$ to be the resolution of a non log-canonical singularity (see \cite[Example 4.29]{gignac-ruggiero:locdynnoninvnormsurfsing}).
One can also construct examples of selfmaps $f \colon (X,x_0) \selfarrow$ on any non log-canonical singularity, or on (the finite quotient of) any cusp singularity (see \cite[Remark 4.32]{gignac-ruggiero:locdynnoninvnormsurfsing}).
See also \cite[Chapter 9]{gignac-ruggiero:locdynnoninvnormsurfsing} for some explicit examples with large topological degree, and \refssec{strictnoeff} for examples of strict selfmaps.

\section{Topological degree and strict germs}\label{sec:topdegstrict}

\subsection{Generalities on topological degree}

Let $f \colon (X,x_0) \to (Y,y_0)$ be a dominant holomorphic germ between two (normal surface) singularities.
When $f$ is \emph{finite},
the topological degree of $f$ is defined as the number of preimages (counted with multiplicity) of a generic point. It can be also computed as the dimension of $\nfrac{R_X}{f^*\mf{m}_Y}$  (see \cite[Section 5.4 Proposition 2]{arnold-guseinzade-varchenko:singdiffmaps1} for a proof in the regular setting).

A third interpretation in the regular setting is given by the degree of the map induced by $f$ on the links $\nS^{3}$ (see \cite[Section 5.2 Theorem 1]{arnold-guseinzade-varchenko:singdiffmaps1}).

In the \emph{generically finite} case (i.e., the germ $f$ is dominant, but not necessarily finite), we consider the following definition of topological degree.

\begin{defi}
Let $f \colon (X,x_0) \to (Y,y_0)$ be a dominant holomorphic germ between two (normal surface) singularities.
We define the \emph{topological degree} of $f$ as:
\[
\topdeg f = \inf_{U} \sup_y \#f|_U^{-1}(y)
\]
where $U$ varies among open neighborhoods of $x_0$ in $U$, and $y$ varies among points in $Y$ which have finite preimage by $f|_U$.
\end{defi}
Notice that the topological degree does not depend on the representant of $f$ chosen; it is a positive integer (we will see that the topological degree is always finite).

\begin{ex}
Given a matrix $A=\begin{pmatrix}
a&b\\
c&d
\end{pmatrix}
,$
with $a,b,c,d \in \nN^*$, we consider the map $f_A \colon (\nC^2,0) \to (\nC^2,0)$ given by $f(z,w)=(z^a w^b, z^c w^d)$.
If $\det A \neq 0$, then $f$ is a dominant germ, which contracts the coordinate axes $\{zw=0\}$ to the origin $0$.
Notice also that the points in $\{zw=0\} \setminus \{0\}$ do not belong to the image of $f$, while for any point outside the coordinate axes, we have exactly $\abs{\det A}$ preimages. We deduce that $\topdeg f = \abs{\det A}$. 
These maps belong to class $6$ of \cite{favre:rigidgerms}.
\end{ex}

\begin{ex}
Fix any $a \geq 2$ and $c \geq 1$.
For any $k \in \nN^*$, we consider the maps $f_k \colon (\nC^2,0) \to (\nC^2,0)$, given by $f_k(z,w)=(z^a, z^c w +z^k)$, which belong to the special case of class $4$ of \cite{favre:rigidgerms}.
The map $f_k$ is in normal form when $k \leq c$ or $k=\frac{ac}{a-1}$, while for the remaining values of $k$, the germ $f_k$ is conjugated to $f_\infty \colon (z,w) \mapsto (z^a,z^c w)$.
By \cite[Proposition 1.5]{favre:rigidgerms}, we have
\[
\topdeg f_k = \begin{cases}
\gcd(a,k) & \text{if } k \leq c\text{,}\\
a & \text{if } k > c\text{.}
\end{cases}
\]
\end{ex}

The interpretation of the topological degree in terms of the dimension of $\nfrac{\mc{O}_X}{f^*\mf{m}_Y}$ does not work in the generically finite setting, since this dimension is infinite in this case.
However, a related concept is the degree $\delta(f)$ of the field extension of $\Frac (R_X)$ over $f^* \Frac(R_Y)$.

\begin{ex}
For the previous examples, a direct computation gives
\[
\delta(f_A)=\abs{\det A} = \topdeg f_A\text{,}
\qquad
\delta(f_k)=p \geq \topdeg f_A\text{.}
\]
\end{ex}

Another way to compute the topological degree comes from the action $f_\bullet \colon \NVal[X] \to \NVal[Y]$ on valuation spaces.
The following result is a consequence of \cite[Theorem  4.18]{gignac-ruggiero:locdynnoninvnormsurfsing}.

\begin{prop}
\label{prop:topdegboundpreimages}
Let $f \colon (X,x_0) \to (Y,y_0)$ be a dominant holomorphic germ between two normal surface singularities.
Then
\[
\topdeg{f}= \max_{\nu \in \NVal[Y]} \# f_\bullet^{-1}(\nu)\text{.}
\]
\end{prop}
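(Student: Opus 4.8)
\emph{Strategy.} I would prove $\topdeg{f}=\max_{\nu\in\NVal[Y]}\#f_\bullet^{-1}(\nu)$ by the two inequalities. The inequality ``$\ge$'' amounts to producing a \emph{single} valuation whose $f_\bullet$-fibre has $\topdeg{f}$ elements; the inequality ``$\le$'' is a uniform bound on all fibres, and it is the one that rests most heavily on \cite[Theorem 4.18]{gignac-ruggiero:locdynnoninvnormsurfsing}.

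\emph{The bound $\#f_\bullet^{-1}(\nu)\le\topdeg{f}$.} This I would read off from \cite[Theorem 4.18]{gignac-ruggiero:locdynnoninvnormsurfsing}: that result equips $f_\bullet$ with a notion of \emph{local degree} $e(f_\bullet,\mu)\in\nZ_{\geq 1}$ at each $\mu\in\NVal[X]$, and shows that for every $\nu$ in the image one has $\sum_{\mu\in f_\bullet^{-1}(\nu)}e(f_\bullet,\mu)=\topdeg{f}$ (if $\nu\notin f_\bullet(\NVal[X])$ the fibre is empty and there is nothing to check). Since each summand is at least $1$, the fibre has at most $\topdeg{f}$ points. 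On divisorial valuations this is transparent through \refprop{imagedivi}: after choosing good resolutions realizing the primes involved and resolving the indeterminacy of $\hat f\colon X_\pi\dashrightarrow Y_\varpi$, the divisorial preimages of $\nu_{E'}$ are exactly the $\nu_E$ with $\hat f(E)=E'$, carrying local degree $\kk{E}\cdot\deg(\hat f|_E)\ge 1$, and the total $\topdeg{f}$ coming from a general point of $E'$ gets split among them and, by \refprop{propertiesfbullet} and \refprop{imageccurve}, possibly among finitely many contracted curve semivaluations.

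\emph{Attaining the bound.} For ``$\ge$'' it suffices to exhibit one $\nu$ with $\#f_\bullet^{-1}(\nu)=\topdeg{f}$, and the point to keep in mind — already visible for $f(z,w)=(z,w^2)$, whose ``general'' smooth branches at $0$ pull back to a \emph{single} branch mapped with degree $2$ — is that such a $\nu$ should be taken to be a curve semivaluation for a curve $C'$ chosen on a sufficiently high model. Concretely: pass to a good resolution $\varpi\colon Y_\varpi\to(Y,y_0)$, resolve the indeterminacy of the lift on a model $\pi\colon X_\pi\to(X,x_0)$, and then, possibly after one further blow-up at a general free point, choose an exceptional prime $E'\in\dgvert{\varpi}$ in the image of $\hat f$ over which $\hat f$ is an unramified cover of degree $\topdeg{f}$ near a general closed point $q\in E'$; the existence of such an $E'$ is precisely the structural content furnished by \cite[Theorem 4.18]{gignac-ruggiero:locdynnoninvnormsurfsing}, namely that the local degree of $f_\bullet$ is generically $1$. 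For a general smooth curvette $C'_q$ at such a $q$, the preimage $\hat f^{-1}(C'_q)$ is a disjoint union of $\topdeg{f}$ smooth curvettes meeting $\pi^{-1}(x_0)$, each mapped isomorphically onto $C'_q$; pushing forward by $\pi$, the branch $C':=\varpi(C'_q)$ at $y_0$ has exactly $\topdeg{f}$ preimage branches $D_1,\dots,D_{\topdeg{f}}$ at $x_0$, each with $f|_{D_i}$ of degree $1$. By \refprop{imageinte}, $f_\bullet\nu_{D_i}=\nu_{C'}$ for all $i$; the $\nu_{D_i}$ are pairwise distinct, since $\nu_{D_i}$ is finite on a local equation of $D_j$ while $\nu_{D_j}$ takes the value $+\infty$ on it; and, by \refprop{propertiesfbullet} together with the fact that a curve semivaluation determines its branch, every $\mu\in f_\bullet^{-1}(\nu_{C'})$ is of the form $\nu_D$ with $f(D)=C'$, hence is one of the $\nu_{D_i}$. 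Therefore $\#f_\bullet^{-1}(\nu_{C'})=\topdeg{f}$, which together with the previous bound proves the proposition, the maximum being attained at $\nu_{C'}$.

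\emph{Main difficulty.} The genuinely delicate step is the selection of $E'$ — equivalently, the assertion that $f_\bullet$ has local degree $1$ at a generic valuation. A crude choice of general curve at $y_0$ does not work (as the example above shows), so one really has to pass to a high enough model where the ramification of the lift is localized away from a generic point of $E'$; establishing this is where \cite[Theorem 4.18]{gignac-ruggiero:locdynnoninvnormsurfsing} does the real work, and I would invoke it rather than reprove it.
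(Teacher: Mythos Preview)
The paper does not give its own proof of this proposition: it simply records it as a consequence of \cite[Theorem~4.18]{gignac-ruggiero:locdynnoninvnormsurfsing} and moves on. Your proposal is a correct way to spell out that derivation, splitting into the two inequalities and invoking the cited theorem for both; this is exactly the intended argument.

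One small inaccuracy worth flagging: you write that for every $\nu$ in the image one has $\sum_{\mu\in f_\bullet^{-1}(\nu)}e(f_\bullet,\mu)=\topdeg f$. In the generically finite (non-finite) case this equality can fail for some $\nu$ --- the paper itself notes just after the proposition that the number of preimages counted with multiplicity ``can be strictly less'' --- so the correct statement from \cite[Theorem~4.18]{gignac-ruggiero:locdynnoninvnormsurfsing} is $\sum_\mu e(f_\bullet,\mu)\le\topdeg f$. This does not affect your argument for the bound $\#f_\bullet^{-1}(\nu)\le\topdeg f$, and your construction of a curvette through a generic point of a suitable exceptional prime to attain equality is the right idea.
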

In fact, the number of preimages by $f_\bullet$ (counted with multiplicity) of a valuation $\nu \in \NVal[Y]$ is exactly $\topdeg f$ in the finite case, while it can be strictly less for some valuations $\nu$ in the generically finite case (see \cite[Remark 4.19]{gignac-ruggiero:locdynnoninvnormsurfsing}).

\refprop{topdegboundpreimages} gives a sharp bound on a number of preimages of a valuation with respect to \cite[Proposition 2.4]{favre-jonsson:eigenval}, where the authors show that the cardinality of $f_\bullet^{-1}(\nu)$ is bounded by $\delta(f)$.
In particular, we deduce that $\topdeg f \leq \delta(f)$ for any $f$, and we have seen that the inequality can be strict.

The discrepancy between $\delta(f)$ and $\topdeg f$ is due to the fact that the latter measures the number of preimages that remain close to $x_0$, while the former counts also the preimages that escape small neighborhoods of $x_0$.

\subsection{Strict germs}

\begin{defi}
A holomorphic germ $f \colon (X,x_0) \to (Y,y_0)$ between normal surface singularities (or more generally between germs complex analytic spaces of the same pure dimension) is called \emph{strict} if it has topological degree $1$ and it is not an isomorphism.
\end{defi}
\begin{rem}
Sometimes in the literature the family of strict germs contains also local isomorphism.
\end{rem}

As a direct consequence of \refprop{topdegboundpreimages}, we obtain the following.

\begin{cor}\label{cor:strictvalinj}
Let $f \colon (X,x_0) \to (Y,y_0)$ be a strict germ between normal surface singularities. The map $f_\bullet \colon \NVal[X] \to \NVal[Y]$ is injective.
\end{cor}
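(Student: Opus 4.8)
The plan is to read off injectivity of $f_\bullet$ directly from \refprop{topdegboundpreimages}. Since $f$ is strict, by definition $\topdeg f = 1$, so the proposition tells us that
\[
\max_{\nu \in \NVal[Y]} \# f_\bullet^{-1}(\nu) = 1\text{.}
\]
In other words, every valuation $\nu \in \NVal[Y]$ has at most one preimage under $f_\bullet$, which is precisely the statement that $f_\bullet \colon \NVal[X] \to \NVal[Y]$ is injective. So the corollary is immediate once \refprop{topdegboundpreimages} is granted.

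There is essentially nothing to prove beyond unwinding this; the only point worth a comment is a possible subtlety regarding multiplicities. \refprop{topdegboundpreimages} is phrased as an equality $\topdeg f = \max_\nu \# f_\bullet^{-1}(\nu)$, and in the surrounding discussion the number of preimages ``counted with multiplicity'' is mentioned. The cleanest reading is that $\# f_\bullet^{-1}(\nu)$ already denotes the set-theoretic cardinality of the fiber (the multiplicity refinement only strengthens matters, bounding the weighted count as well), so that $\topdeg f = 1$ forces each fiber to be a singleton or empty. If one instead wants to be scrupulous, one can note that the map $f_\bullet$ is dominant in the relevant sense (it surjects onto $\NVal[Y]$ up to the behaviour on $\VC{f}$, by \refprop{propertiesfbullet} and continuity), so generic fibers are nonempty, and since all fibers have cardinality $\le 1$ we get injectivity on all of $\NVal[X]$.

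I do not expect any genuine obstacle here: the work has all been done in \refprop{topdegboundpreimages}, which in turn rests on \cite[Theorem 4.18]{gignac-ruggiero:locdynnoninvnormsurfsing}. The corollary is a formal consequence, and the proof is a single line.
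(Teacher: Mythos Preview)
Your proof is correct and follows exactly the same line as the paper, which states that the corollary is a direct consequence of \refprop{topdegboundpreimages}: since $\topdeg f = 1$, every fiber of $f_\bullet$ has at most one point. Your additional comments about multiplicities and surjectivity are not needed (the formula in \refprop{topdegboundpreimages} already refers to the set-theoretic cardinality), but they do no harm.
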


The next result describes the critical set of a strict germ, showing that it consists of contracted curves.

\begin{lem}\label{lem:imagecrit2dstrict}
Let $f \colon (X,x_0) \to (Y,y_0)$ be a strict germ between two normal surface singularities.
Let $C=\Crit{f}$ be the critical set of $f$. Then $f(C)=y_0$.
\end{lem}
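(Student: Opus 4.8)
The plan is to use the Jacobian formula together with the injectivity of $f_\bullet$ for strict germs. Since $f$ is strict, $\topdeg f = 1$, so by \refprop{topdegboundpreimages} the map $f_\bullet \colon \NVal[X] \to \NVal[Y]$ is injective (\refcor{strictvalinj}); moreover, counting multiplicities, each fiber $f_\bullet^{-1}(\nu)$ is a single valuation with multiplicity one. This multiplicity-one statement should force the ramification indices appearing in \refprop{imagedivi} and the local degrees in \refprop{imageinte} to equal $1$ for all valuations not in $\VC{f}$: indeed, if $\nu_E$ were a divisorial valuation with ramification index $\kk{E} \geq 2$, or a curve semivaluation $\inte_C$ with local degree $\ee{C} \geq 2$, this would contribute more than one to the generic number of preimages nearby, contradicting $\topdeg f = 1$. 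Hence $f$ is unramified along every divisor and every non-contracted curve, so the critical set $\Crit{f}$ can contain no divisorial part and no non-contracted curve component. Since $\Crit{f}$ is a curve on $X$ (it is the zero locus of the Jacobian), its only possible components are contracted branches, i.e.\ $f(C) = y_0$.

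Alternatively, and perhaps more cleanly, one can argue via the Jacobian divisor directly. Write $\Crit{f}$ as the union of its irreducible components and suppose for contradiction that some branch $C \subseteq \Crit{f}$ is not contracted, so $f(C) = C'$ is a branch at $(Y,y_0)$. The germ $f$ restricted to a punctured neighborhood, away from $x_0$, is a holomorphic map between the smooth surfaces $X \setminus \{x_0\}$ and $Y \setminus \{y_0\}$, and along the smooth locus of $C$ its Jacobian determinant vanishes on $C$; hence $f$ ramifies along $C$, which means $f|_C$ (or rather $f$ transverse to $C$) has local degree $\geq 2$ onto $C'$. By \refprop{imageinte}, $f_* \inte_C = \ee{C} \inte_{C'}$ with $\ee{C} \geq 1$; the ramification along $C$ means that for a generic point $y$ near $y_0$ on $C'$, the fiber $f^{-1}(y)$ meets a neighborhood of $C$ in more than one point, so $\topdeg f \geq 2$, a contradiction. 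Therefore every component of $\Crit{f}$ is contracted, giving $f(C) = y_0$.

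The main obstacle I anticipate is making the link between ``$f$ ramifies along the non-contracted branch $C$'' and ``$\topdeg f \geq 2$'' fully rigorous: one must check that the extra preimages produced by the ramification genuinely remain in a small neighborhood of $x_0$ (recall the subtlety, emphasized after \refprop{topdegboundpreimages}, that $\topdeg f$ only counts preimages close to $x_0$, which is why it can be strictly smaller than $\delta(f)$). This is handled by noting that the preimages in question accumulate along $C$ and hence converge to $x_0$ as $y \to y_0$, so they do lie in every neighborhood $U$ of $x_0$ for $y$ close enough to $y_0$; thus they contribute to $\topdeg f$. Once this localization point is settled, the rest is a routine application of the results already recalled: \refprop{propertiesfbullet}, \refprop{imagedivi}, \refprop{imageinte}, and \refcor{strictvalinj}.
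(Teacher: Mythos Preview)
Your second approach is essentially correct and is, at bottom, the same idea as the paper's proof---but the paper executes it far more simply. Rather than invoking branches, ramification indices, or the valuative apparatus, the paper just picks a point $x \in \Crit{f}$ with $f(x) \neq y_0$; since $f$ is dominant, after shrinking the representative $f$ is finite at $x$; since $\topdeg f = 1$ and $X$ is regular at $x \neq x_0$, this forces $f$ to be a local biholomorphism at $x$, contradicting $x \in \Crit{f}$. That is the whole argument: three lines, no valuations, no Jacobian formula, no \refprop{imagedivi} or \refprop{imageinte}.

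Your first approach, via $f_\bullet$ and multiplicities in the fibers of $f_\bullet^{-1}$, is considerable overkill here; moreover, the statement ``each fiber has a single valuation with multiplicity one'' needs care in the non-finite case (cf.\ the remark after \refprop{topdegboundpreimages}: the total multiplicity in a fiber can drop below $\topdeg f$), so extracting the conclusion $\kk{E}=1$ or $\ee{C}=1$ from it would require more justification than you indicate. In your second approach, the localization worry you raise (do the extra preimages stay near $x_0$?) is exactly what the paper's formulation sidesteps by working pointwise: once $f$ is finite at $x$, all preimages of points near $f(x)$ are automatically near $x$, and there is nothing to check.
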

\begin{proof}
Arguing by contradiction, suppose there is $x \in C$ so that $f(x) \neq y_0$.
Being $f$ dominant, there exists a neighborhood $V$ of $y_0$ so that $f^{-1}(y)$ is finite for any $y \in V$.
Up to shrinking the domain of the representative of $f$, we may hence assume that $f$ is finite at $x$.
Since $\on{topdeg}(f)=1$ and $X$ is normal at $x$ (in fact, it is regular), we deduce that $f$ is a local diffeomorphism at $x$, against the assumption $x \in C$.
\end{proof}

In \cite{gignac-ruggiero:locdynnoninvnormsurfsing}, the authors give a complete description of the dynamics of the map $f_\bullet \colon \NVal[X] \selfarrow$ induced by any (non invertible) dominant selfmap $f \colon (X,x_0) \selfarrow$ on a normal surface singularity.
When $f$ is a strict, \reflem{imagecrit2dstrict} ensures that $f$ is not finite, and the dynamics can be described as follows.

\begin{thm}[{\cite[Theorem B]{gignac-ruggiero:locdynnoninvnormsurfsing}}]\label{thm:strictvaldyn}
Let $f \colon (X,x_0) \selfarrow$ be a strict selfmap of a normal surface singularity.
Then there exists a unique $\nu_\star \in \NVal[X]$ such that $f_\bullet^n \nu \to \nu_\star$ (in the weak topology) for any $\nu \in \NVal[X][\mathrm{qm}]$.
\end{thm}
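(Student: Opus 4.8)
The plan is to realize $\nu_\star$ as an attracting fixed point of $f_\bullet$. Two structural facts drive everything: by \refcor{strictvalinj} the map $f_\bullet\colon\NVal[X]\to\NVal[X]$ is injective, and by \reflem{imagecrit2dstrict} the strict germ $f$ is not finite, so the set $\VC{f}$ of contracted curve valuations is nonempty. Recall also that $\NVal[X]$ is compact, being the projective limit of the finite dual graphs of good resolutions, and carries a tree-like structure with well-defined arcs $[\mu,\nu]$; since $f_\bullet$ is a continuous injection it maps arcs to arcs, $f_\bullet[\mu,\nu]=[f_\bullet\mu,f_\bullet\nu]$.

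First I would produce a fixed point. For each good resolution $\pi$, with retraction $\rho_\pi\colon\NVal[X]\to\skel[\pi]$, the composition $\rho_\pi\circ f_\bullet$ is a continuous self-map of the finite tree $\skel[\pi]$, hence has a fixed point; passing to a limit of these fixed points over the inverse system of resolutions, and using continuity of $f_\bullet$ together with the compatibility $\rho_{\pi_0}\circ\rho_\pi=\rho_{\pi_0}$, one obtains $\nu_\star\in\NVal[X]$ with $f_\bullet\nu_\star=\nu_\star$ (its type being recorded by \refprop{propertiesfbullet}). The crux is then to show that $f_\bullet$ \emph{contracts $\NVal[X]$ towards $\nu_\star$}: for every $\nu$ one has $[\nu_\star,f_\bullet\nu]\subseteq[\nu_\star,\nu]$, with the endpoint pulled strictly towards $\nu_\star$ unless $\nu$ already lies on an $f_\bullet$-fixed arc issued from $\nu_\star$. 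I would prove this by a local analysis of $f_\bullet$ along the arc $[\nu_\star,\nu]$, reading off its action at divisorial points from \refprop{imagedivi} (the lift $\hat f$ maps a prime $E$ onto its image with ramification index $\kk{E}$) and at curve or irrational ends from \refprop{imageinte}--\refprop{imageccurve}; the point is that $\topdeg f=1$ forces the local multiplicities $\kk{E}$, $\ee{C}$ appearing there never to push a point of $[\nu_\star,\nu]$ outward. Granting the contraction, the arcs $[\nu_\star,f_\bullet^n\nu]$ form a decreasing sequence in the compact tree, hence converge to some $[\nu_\star,\mu_\nu]$ with $f_\bullet\mu_\nu=\mu_\nu$; in particular $f_\bullet^n\nu\to\mu_\nu$.

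It then remains to rule out a nondegenerate $f_\bullet$-fixed arc — equivalently, to show $\nu_\star$ is the unique fixed point — for then $\mu_\nu=\nu_\star$ for all $\nu$, and the theorem follows (the quasimonomial hypothesis being used only to keep the iterates $f_\bullet^n\nu$ finite, cf. \refprop{propertiesfbullet}). A pointwise-fixed arc would give $f_*\divi_E=\kk{E}\,\divi_E$ at each of its divisorial points (\refprop{imagedivi}), so the Jacobian formula \refprop{jacobian_formula} would yield $(\kk{E}-1)\,\Kld_X(\divi_E)=\divi_E(\JD{f})$; propagating this identity along the arc is incompatible with $\topdeg f=1$ together with the non-finiteness of $f$, since the exceptional locus of $f$, which carries $\JD{f}$ nontrivially, cannot be invisible to the entire arc.

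The main obstacle is the contraction step. Because $f_\bullet$ is not a self-map of any single finite skeleton, making it precise requires tracking how the sequence of infinitely-near points attached to a valuation evolves under iteration and controlling it uniformly by $\topdeg f=1$; this is exactly the delicate valuative analysis carried out in \cite{gignac-ruggiero:locdynnoninvnormsurfsing}, to which we refer for the complete argument.
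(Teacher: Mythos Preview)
Your outline follows the Favre--Jonsson strategy for the valuative tree over $(\nC^2,0)$, but it relies on a structural hypothesis that fails in the singular setting: you repeatedly use that $\NVal[X]$ carries ``well-defined arcs $[\mu,\nu]$'' and that a continuous injection sends $[\mu,\nu]$ to $[f_\bullet\mu,f_\bullet\nu]$. This is only available when $\NVal[X]$ is an $\nR$-tree, i.e.\ when $(X,x_0)$ is smooth. For a general normal surface singularity the skeleta $\skel[\pi]$ are dual \emph{graphs}, possibly with cycles (think of cusp singularities), so $\NVal[X]$ is a projective limit of graphs and there is no canonical arc between two points. Your fixed-point production (``$\rho_\pi\circ f_\bullet$ has a fixed point on the finite tree $\skel[\pi]$''), the monotonicity $[\nu_\star,f_\bullet\nu]\subseteq[\nu_\star,\nu]$, and the exclusion of a ``fixed arc'' all lean on this tree picture and do not go through as written.

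The approach actually taken in \cite{gignac-ruggiero:locdynnoninvnormsurfsing}, and summarized in the remark following the statement, sidesteps the tree issue entirely by introducing the \emph{angular distance} $\rho_X(\nu,\nu')=\log\big(\beta_X(\nu|\nu')\beta_X(\nu'|\nu)\big)$ on $\NVal[X]$, built from relative Izumi constants. One shows that $f_\bullet$ is non-expanding for $\rho_X$, and is a \emph{weak} contraction (strict inequality for $\nu\neq\nu'$ quasimonomial) precisely when $f$ is non-finite; existence, uniqueness and attraction of $\nu_\star$ then follow from metric fixed-point arguments plus equicontinuity. This is where the quasimonomial hypothesis is genuinely used: $\rho_X$ is finite exactly on $\NVal[X][\mathrm{qm}]$, not merely to ``keep iterates finite'' as you suggest. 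A secondary issue: your uniqueness step invokes the Jacobian formula to force $\divi_E(\JD{f})>0$ along a fixed arc, but as discussed in \refssec{jacobianformula} and \refssec{strictnoeff}, $\JD{f}$ need not be effective on a singular $(X,x_0)$, so that inequality is not available.
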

The unique $\nu_\star$ given by \refthm{strictvaldyn} will be referred as \emph{the eigenvaluation} of $f$.

\begin{rem}
We will spend some words about the proof of \refthm{strictvaldyn}. In \cite{gignac-ruggiero:locdynnoninvnormsurfsing}, the authors introduce an extended distance $\rho_X$ on $\NVal[X]$, which takes finite values on $\NVal[X][\mathrm{qm}]$, called \emph{angular distance}.
This distance is defined in terms of intersection of b-divisors associated to valuations, or equivalently as
$$
\rho_X(\nu,\nu'):=\log \beta_X(\nu|\nu') \beta_X (\nu'|\nu)\text{, \qquad where } 
\beta_X(\nu|\nu') := \sup_{\mf{a}} \frac{\nu(\mf{a})}{\nu'(\mf{a})}
$$
is the relative Izumi constant of $\nu$ with respect to $\nu'$, and $\mf{a}$ varies among all $\mf{m}_X$-primary ideals of $R_X$.
The angular distance plays the role of the Poincaré distance for hyperbolic Riemann surfaces, in the sense that for any dominant germ $f \colon (X,x_0) \to (Y,y_0)$, the induced action $f_\bullet \colon \NVal[X] \to \NVal[Y]$ is not expanding for the angular distance:
$$
\rho_Y(f_\bullet \nu, f_\bullet \nu') \leq \rho_X(\nu,\nu')\qquad \forall \nu, \nu' \in \NVal[X]\text{.}
$$
If moreover $f$ is a non-finite (in particular, if $f$ is a strict) selfmap, then $f_\bullet$ is a weak contraction, i.e., we have the strict inequality for any $\nu \neq \nu' \in \NVal[X][\mathrm{qm}]$.
This property is enough to ensure the existence and uniqueness of a locally attracting fixed point $\nu_\star$ (the eigenvaluation), while the equicontinuity of $f_\bullet^n$ with respect to the angular distance guarantees that the basin of attraction at $\nu_\star$ contains all quasimonomial valuations.
\end{rem}


\section{Two proofs of \refthm{strictkato}}\label{sec:proofstrictkato}

\subsection{Via local flattening}


The first proof of \refthm{strictkato} we present here is essentially due to Charles Favre.
The main ingredient is Hironaka Lejeune-Jalabert and Teissier's local flattening theorem. We refer to \cite[Flattening theorem and Theorem 4.4]{hironaka:flattening} for proper morphisms in the global setting, to \cite[Theorem 4]{hironaka-lejeunejalabert-teissier:platificateurlocal}
and to \cite[Theorem 4.4]{hironaka:introrealanal} for the non-proper case (in french and in english).
Here we give the statement in the local setting in dimension 2. 
\begin{thm}[{\textbf{Local flattening Theorem}}]\label{thm:flattening}
Let $f \colon (X,x_0) \to (Y,y_0)$ be a dominant germ between two normal surface singularities. Then there exists a blow-up $\pi_Y$ of a coherent sheaf of ideals $\mc{I}$ supported at $y_0$, and a commutative diagram
\begin{equation}\label{eqn:flatteningdiagram}
\xymatrix@C=2pc@R=2pc@M=3pt@L=3pt{
\hat{X} \ar@{->}[r]^{\hat{f}} \ar@{->}[d]_{\pi_X} &  \hat{Y} \ar@{->}[d]^{\pi_Y} \\
X \ar@{->}[r]^{f} &  Y \\
}
\end{equation}
so that
\begin{itemize}
\item $\pi_X$ is the blow-up of $f^* \mc{I}$,
\item the lift $\hat{f}$ is flat on the fiber $\pi_X^{-1} \circ f^{-1}(y_0)$.
\end{itemize}
\end{thm}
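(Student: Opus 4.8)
The plan is to deduce the statement from the general flattening-by-blow-ups theorem of Hironaka (equivalently, from Raynaud--Gruson's flattening criterion), taking care that the modification of $X$ it produces can be chosen to be the blow-up of a pullback ideal, so as to match the statement on the nose. First I would fix representatives of the germs for which $f$ is proper and check that $f$ is already flat over $U:=Y\setminus\{y_0\}$: any curve contracted by $f$ passes through $x_0$ and is therefore sent to $y_0$, so over $U$ the fibres of $f$ are finite; since a normal surface is Cohen--Macaulay and $U$ is regular of the same dimension, the miracle flatness criterion applies. In particular the non-flat locus of $f$ is contained in $f^{-1}(y_0)$.

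Next I would apply the flattening theorem to $f\colon X\to Y$ relative to $U$: it yields a blow-up $\pi_Y\colon\hat Y\to Y$ of a coherent ideal sheaf $\mc I\subseteq\mc O_Y$ cosupported at $y_0$ (hence a modification) for which the strict transform of $X$ inside $X\times_Y\hat Y$ is flat over $\hat Y$. By the standard description of strict transforms of morphisms under blow-ups, this strict transform is precisely $\operatorname{Bl}_{f^*\mc I}X$, the blow-up of the pullback ideal $f^*\mc I=\mc I\cdot\mc O_X$. Taking this as $\pi_X\colon\hat X\to X$ and letting $\hat f\colon\hat X\to\hat Y$ be the induced morphism, the square \eqref{eqn:flatteningdiagram} commutes by construction and $\hat f$ is flat everywhere, in particular on $\pi_X^{-1}(f^{-1}(y_0))$, as required.

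The hard part is the flattening theorem itself, which in full generality is a deep result; for that reason I would simply quote \cite{hironaka:flattening,hironaka-lejeunejalabert-teissier:platificateurlocal,hironaka:introrealanal} rather than reprove it. It is nonetheless instructive to see what it amounts to in our setting: by miracle flatness, a morphism from a Cohen--Macaulay surface to a regular surface fails to be flat exactly along its positive-dimensional fibres, that is, along contracted curves, all of which lie over $y_0$. Blowing up an ideal cosupported at $y_0$ ``opens $y_0$ up'' into the exceptional divisor of $\pi_Y$, and a curve $C$ with $f(C)=y_0$ ceases to be contracted by the lift once $\mc I$ is chosen so that the divisorial valuation $f_\bullet\nu_C=\nu_G$ (divisorial by \refprop{imageccurve}) is realised on $\hat Y$ and the strict transform of $C$ dominates $G$. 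One then has to ensure that replacing $X$ by $\operatorname{Bl}_{f^*\mc I}X$, in order to turn the lift into a morphism, does not create fresh contracted curves among the exceptional primes of $\pi_X$, which forces $\mc I$ to be enlarged and the construction to be iterated. I would expect the true obstacle to be precisely the proof that this alternating procedure terminates --- by a Noetherian argument on the combinatorics of the relevant dual graphs --- which is the technical core of the flattening theorem and well beyond what one would want to reproduce here.
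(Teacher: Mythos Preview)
The paper does not prove this theorem; it is quoted from the cited references of Hironaka and Hironaka--Lejeune-Jalabert--Teissier, together with a one-line remark explaining why a \emph{single} blow-up $\pi_Y$ suffices in dimension $2$. Your proposal is in the same spirit (reduce to the literature), and the added discussion --- flatness over $Y\setminus\{y_0\}$ via miracle flatness, the identification of the strict transform of $X$ in $X\times_Y\hat Y$ with $\operatorname{Bl}_{f^*\mc I}X$ --- is correct and a helpful unpacking of what is being cited.

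There is, however, one genuine slip. You write that you would ``fix representatives of the germs for which $f$ is proper''. For a non-finite germ this cannot be arranged: as the paper explicitly notes right after the statement, ``in our setting $f$ is not proper''. This is not cosmetic. Properness is exactly what, in Hironaka's proper flattening theorem, guarantees that a single ideal $\mc I$ on $Y$ does the job; in the non-proper local version one obtains a priori a finite family of local blow-ups, not one modification. The paper's substitute for properness is the observation that, since $\dim Y=2$, the local flattening ideal is locally principal, and hence one modification suffices. Your write-up should drop the properness claim and insert this dimension-$2$ argument in its place; otherwise the passage from ``flattening by local blow-ups'' to ``a single blow-up $\pi_Y$ of a coherent $\mc I$ supported at $y_0$'' is not justified.
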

Note that in our setting $f$ is not proper, and in general one could need more than one (but still finitely many) modification $\pi_Y$ to achieve flattening.
But since $Y$ has dimension $2$, the local flattening ideal is locally principal, and in this case one modification suffices.

\begin{proof}[Proof of \refthm{strictkato} (via local flattening)]
Let $f\colon (X,x_0) \to (Y,y_0)$ be a strict germ between two normal surface singularities.
By the Local flattening \refthm{flattening}, we have the commutative diagram of \refeqn{flatteningdiagram}. 
Denote by $C=\mc{C}(f)$ the critical set of $f$. By \reflem{imagecrit2dstrict}, $f(C)=y_0$.
Up to shrinking $X$, we have that $C=f^{-1}(y_0)$, and $f$ induces a biholomorphism between $X \setminus C$ and its image in $Y\setminus \{y_0\}$.
Being $\pi_X$ and $\pi_Y$ modifications over $x_0 \in C$ and $y_0$ respectively, we infer that $\hat{f}$ is a biholomorphism with its image outside of $\pi_X^{-1} (C)$.

We claim that $\hat{f} \colon \hat{X} \to \hat{f}(\hat{X})$ is an isomorphism with its image, that we may suppose relatively compact in $\hat{Y}$ up to shrinking $X$ if necessary. 

Assuming that the claim is proved, we now construct a new surface $Y'$ by gluing together $\hat{Y} \setminus \hat{f}(\hat{X})$ and $X$ through the identification $\hat{f} \circ \pi_X^{-1} \colon \partial X \to \hat{f}(\partial \hat{X})$.

\begin{rem}
More precisely, the gluing procedure is done along small tubular neighborhoods of $\partial X$ and $\hat{f}(\partial \hat{X})$, where $\hat{f} \circ \pi_X^{-1}$ acts as an isomorphism. The isomorphism class of the new surface $Y'$ does not depend on the choice of such neighborhoods.
From now on, we will simply write
\[
Y'= \nfrac{\hat{Y} \setminus \hat{f}(\hat{X}) \sqcup X}{\hat{f} \circ \pi_X^{-1} \colon \partial X \to \hat{f}(\partial\hat{X})}\text{,}
\]
without further mention of the tubular neighborhoods.
\end{rem}

We can also consider the following natural maps.

\begin{itemize}
\item The biholomorphism $\sigma \colon X \to Y'$ obtained by identifying $X$ with its copy inside $Y'$.
\item The map $\pi\colon Y' \to Y$ defined as $\pi_Y$ on $\hat{Y}\setminus \hat{f}(\hat{X})$ and as $f$ on the copy of $X$ inside $Y'$. This is well defined since $\pi_Y \circ \hat{f} \circ \pi_X^{-1} = f$ on $\partial X$.
\item The map $\eta\colon \hat{Y} \to Y'$, defined as the identity on $\hat{Y}\setminus \hat{f}(\hat{X})$ and as $\pi_X \circ \hat{f}^{-1}$ on $\hat{f}(\hat{X})$.
\end{itemize}
They fit in the following commutative diagram:
\[
\xymatrix@C=2pc@R=2pc@M=3pt@L=3pt{
\hat{X} \ar@{->}[rr]^{\hat{f}} \ar@{->}[dd]_{\pi_X} & &  \hat{Y} \ar@{->}[dd]^{\pi_Y} \ar@{->}[dl]^{\eta}\\
& Y' \ar@{->}[dr]^{\pi}& \\
X \ar@{->}[rr]^{f} \ar@{->}[ur]^{\sigma} & & Y \\
}
\]
By construction, $Y'$ is a surface with a singularity at $y_1=\sigma(x_0)$ that is isomorphic to $(X,x_0)$. In particular, up to shrinking $Y$ if necessary, we may assume that $Y'$ is normal, and $y_1$ is the only singularity of $Y'$.
Finally, $\pi$ is a modification over $y_0$, since it is defined as $f$ at a neighborhood of $y_1$. 

It remains to prove the claim. For properties of flat maps on complex spaces, we refer to \cite[Chapter 3]{fischer:cplxanalgeom} or to \cite[Section II.2]{grauert-peternell-remmert:sevcplxvar7}.

Since $\hat{f}$ is flat, it is an open map
;
since $\hat{f}$ is generically finite, we deduce that for any $\hat{y}$ in a small open neighborhood of $E_Y := \pi_Y^{-1}(y_0)$, the fiber $\hat{f}^{-1}(\hat{y})$ is discrete
.
Being $E_Y$ and $E_X:= \pi_X^{-1}(x_0)$ compact sets, we deduce that, up to shrinking $X$ if necessary, the map $\hat{f} \colon \hat{X} \to \hat{Y}$ is finite.
For finite flat maps, the number of preimages of a point $\hat{y}$, counted with multiplicity, is locally constant
. Being $\hat{f}$ generically injective, we deduce that $\hat{f}$ is an injective open map.
But then, at any point $\hat{x} \in E_X$, the map $\hat{f}$ is a injective and flat, hence a local isomorphism
.
We deduce that $\hat{f}$ is an isomorphism with its image.
\end{proof}

\subsection{Via valuation theory}

The second proof of \refthm{strictkato} we present here relies on the geometric description of the action $f_\bullet \colon \NVal[X] \to \NVal[Y]$ described in \refssec{geomactionval}. The proof is rather direct: the idea is to resolve the dynamics along the contracted curve valuations, and take advantage of the rather simple geometry of $2$-dimensional bimeromorphic maps.

\begin{proof}[Proof of \refthm{strictkato} (via valuation theory)]
Let $f\colon (X,x_0) \to (Y,y_0)$ be a strict germ between two normal surface singularities.

By \reflem{imagecrit2dstrict}, the critical set $\Crit{f}$ of $f$ satisfies $f(\Crit{f})=y_0$.
In other terms, any critical branch $C$ is contracted to $y_0$, and its associated curve semivaluation $\nu_C$ is sent by $f_\bullet$ to a divisorial valuation.

Denote by $\VC{f}$ the set of contracted curve semivaluations.
Since $f_\bullet (\VC{f})$ consists of divisorial valuations, there exists a good resolution $\hat{\pi} \colon Y_{\hat{\pi}} \to (Y,y_0)$ such that $f_\bullet (\VC{f}) \subseteq \skel[\hat{\pi}]^*$.

We can then consider the contraction $\eta \colon Y_{\hat{\pi}} \to Y_\pi$ of all exceptional primes $E$ whose associated divisorial valuation $\nu_E$ lies in $\skel[\hat{\pi}]^* \setminus f_\bullet (\VC{f})$.
By construction we have that $Y_\pi$ is normal, and $\hat{\pi}= \pi \circ \eta$ for a suitable modification $\pi \colon Y_\pi \to (Y,y_0)$ such that $f_\bullet (\VC{f}) = \skel[\pi]^*$.

We claim that $f \colon (X,x_0) \to (Y,y_0)$ lifts to a local isomorphism $\sigma \colon (X,x_0) \to (Y_\pi,y_1)$, for a suitable $y_1 \in \pi^{-1}(y_0)$, as resumed by the following commutative diagram:
\[
\xymatrix@C=6pt@R=20pt@M=3pt@L=3pt{
 & &  Y_{\hat{\pi}} \ar@{->}[dd]^{\hat{\pi}} \ar@{->}[dl]^{\eta}\\
& Y_\pi \ar@{->}[dr]^{\pi}& \\
(X,x_0) \ar@{->}[rr]^{f} \ar@{->}[ur]^{\sigma} & & (Y,y_0) \\
}
\]

In fact, being $f_\bullet$ injective by \refcor{strictvalinj}, we have that the image of $\NVal[X] \setminus \VC{f}$ by $f_\bullet$ is contained in $\NVal[Y] \setminus f_\bullet (\VC{f})$. Being $f_\bullet$ continuous and $\NVal[X] \setminus \VC{f}$ connected, we deduce that $f_\bullet(\NVal[X] \setminus \VC{f})$ is contained in a connected component of $\NVal[Y] \setminus f_\bullet (\VC{f}) = \NVal[Y] \setminus \skel[\pi]^*$, which is of the form $U_\pi(y_1)$ for a suitable $y_1 \in \pi^{-1}(y_0) \subset Y'$.

By continuity of $f_\bullet$ we deduce that $f_\bullet(\NVal[X]) \subseteq \overline{U_\pi(y_1)}$, and by the valuative criterion of properness \refprop{valcritproperness} we infer that the bimeromorphic map $\sigma := \pi^{-1} \circ f \colon (X,x_0) \dashrightarrow Y'$ is in fact regular, and $\sigma(x_0)=y_1$.

Moreover, by construction, $\sigma$ does not contract any curve, and we deduce that $\sigma$ is a local isomorphism. 
\end{proof}

\subsection{Remarks}

Let $f \colon (X,x_0) \to (Y,y_0)$ be a dominant map between normal analytic singularities of the same dimension $d$ and topological degree $e$.
We have showed that if $d=2$ and $e=1$, then $f$ factorizes as $f=\Kpi \circ \Ksigma$, with $\Kpi$ a modification and $\Ksigma$ a local isomorphism.
There are several ways one could try to generalize this result.

First, notice that local isomorphisms are exactly the germs of topological degree $1$ that are also finite (we are working with normal singularities).
Then one could wonder if it is true that any map of topological degree $e$ can be factorized as $f=\Kpi \circ \Ksigma$, with $\Kpi \colon Y' \to (Y,y_0)$ a modification, and $\Ksigma \colon (X,x_0) \to (Y',y_1)$ finite.

If such a decomposition exists, then in particular we must have that $f_\bullet (\NVal[X])$ is contained in a proper closed subset $\overline{U} \varsubsetneq \NVal[Y]$ with $U=U_\Kpi(x_1)$.
But there exists non-finite maps for which $f_\bullet$ is surjective.

\begin{ex}
The map $f \colon (\nC^2,0) \selfarrow$ defined by $f(x,y)=\big(x(y+x^3),x^2y\big)$ has topological degree $5$, and is not finite, since $f(\{x=0\})=0$; its induced action $f_\bullet \colon \NVal \selfarrow$ on the valuative tree is surjective. See \cite[Section 9.2]{gignac-ruggiero:locdynnoninvnormsurfsing} for detailed computations.
\end{ex}

\begin{rem}\label{rem:hitopnonfinite}
Any singularity $(X,x_0)$ admits non-finite selfmaps of high topological degree of the form $f = \Kpi \circ \Ksigma$ with $\Kpi$ modification and $\Ksigma$ finite.
To see that, take a resolution $\Kpi \colon X' \to (X,x_0)$; then consider $\ell \colon (X,x_0) \to (\nC^d,0)$ to be an embedding $(X,x_0) \hookrightarrow (\nC^D,0)$ followed by a generic projection $(\nC^D,0) \to (\nC^d,0)$. The genericity ensures that $\ell$ is finite.
Finally, pick any dominant, finite morphism $g \colon (\nC^d,0) \to (X',x_1)$, where $x_1 \in \pi^{-1}(x_0)$.
The composition $f=\Kpi \circ \underbrace{g \circ \ell}_{\Ksigma} \colon (X,x_0) \selfarrow$ gives a map of the desired form.
\end{rem}

\begin{rem}
Focussing on the case $e=1$, the two proofs of \refthm{strictkato} don't work as well in higher dimensions: since $f$ is not finite, the local flattening theorem requires several local blow-ups, possibly of non-compact centers, to get flatness.
As for the approach with valuation spaces, it makes strong use of the local tree structure of $\NVal[X]$; in higher dimension however $\NVal[X]$ is a projective limit of higher dimensional complexes.
Moroever, one needs also to be careful to new phenomena due to the absence of strong factorizations of birational maps.
\end{rem}
 
\begin{ex}
The map $f \colon (\nC^3,0) \selfarrow$ defined by
\[
f(x,y,z)=\big(x^3y^2z^3, x^2yz^2,x^2y^2z\big)
\]
has topological degree $1$, and contracts all the coordinate hyperplanes to $\{0\}$. It defines in local coordinates a toric modification of $(\nC^3,0)$, but this modification does not factorize through the blow-up of the origin.
See \cite[Example 3.6]{istrati-otiman-pontecorvo-ruggiero:torickatomanifolds} for further details.
\end{ex}

\section{Proof of \refthm{strictsandwiched}}\label{sec:Katosandwiched}

We proceed here with the proof of \refthm{strictsandwiched}. We start by reporting the construction of Kato data on sandwiched singularities, that can be found in \cite[Section 9.3]{fantini-favre-ruggiero:sandwichselfsim}.
We then prove that a normal surface singularity admitting a Kato germ $f$ is sandwiched, under the assumption that the eigenvaluation $\nu_\star$ of $f$ is not divisorial.
In the next section, we will show that $\nu_\star$ is not divisorial, thus concluding the proof.

\subsection{Sandwiched singularities are selfsimilar}\label{ssec:sandwichedselfsim}

\newcommand{\piY}{\wt{\Kpi}}
\newcommand{\sigmaY}{\wt{\Ksigma}}

Sandwiched singularities are all obtained as follows (see \cite[Section II.1]{spivakovsky:sandwichedsing}).
\begin{itemize}
\item First, we take a sequence of blow-ups $\piY \colon Y' \to (Y,y_0)= (\nC^2,0)$.
\item We select any exceptional divisor $D$, whose support $\abs{D} \subseteq \piY^{-1}(y_0)$ is connected.
By Grauert's criterion \cite{grauert:ubermodifikationen}, we can contract $D$, to obtain a bimeromorphic map $\mu \colon Y' \to X$, where $X$ has a normal singularity at $x_0:=\mu(D)$ (notice that the normality condition ensures the uniqueness of $\mu$ up to isomorphisms).
\item We contract the part of the exceptional divisor $\piY^{-1}(y_0)$ that we haven't contracted before, thus completing the diagram $\piY= \eta \circ \mu$ with $\eta\colon X \to (Y,y_0)$ the proper bimeromorphic map that gives to $(X,x_0)$ the sandwiched structure.
\end{itemize}

Let now $\sigmaY \colon (Y,y_0) \to (Y',y_1)$ be any local isomorphism, where $y_1 \in \abs{D}$ is any point in the support of $D$.
In particular, the pair $(\piY,\sigmaY)$ is a Kato datum over $(Y,y_0)$.

In this setting, we can lift the Kato datum to $(X,x_0)$ via the bimeromorphic map $\eta$, as follows (see also \reffig{sandkato}).

\begin{itemize}
\item We construct the space $X'$, that will be the total space of the Kato datum over $(X,x_0)$, by fibered product with respect to $\pi$ and $\eta$:
$$
X':= \nfrac{Y' \setminus \sigma(Y) \sqcup X}{\sigmaY \circ \eta \colon \partial X \to \sigmaY(\partial Y)}\text{.}
$$
\item We denote by $\Ksigma \colon X \to X'$ the natural identification between $X$ and its copy inside $X'$. Clearly $\Ksigma$ is an isomorphism.
\item We define the map $\Kpi \colon X' \to X$ as:
$$
\Kpi \equiv
\begin{cases}
\mu &\text{on } Y' \setminus \sigmaY(Y)\text{,}\\
\mu \circ \sigmaY \circ \eta &\text{on } \Ksigma(X)\text{.}
\end{cases}
$$
\end{itemize}
The map $\Kpi$ is a non-trivial bimeromorphic map from $X'$ to $X$, and the pair $(\Kpi,\Ksigma)$ is a Kato datum over $(X,x_0)$, as desired.

\begin{center}
\begin{figure}[htb]
\centering
\def\svgwidth{.6\columnwidth}\input{{\immdir}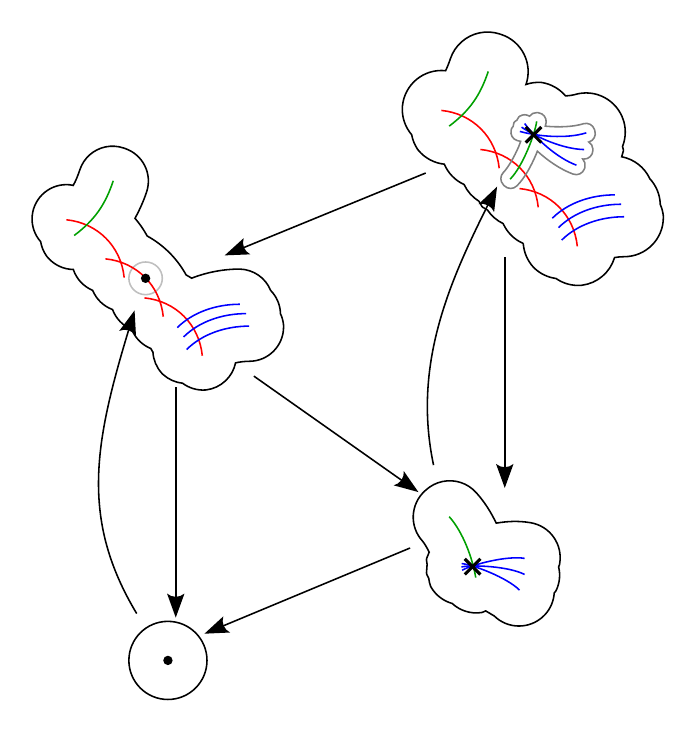_tex}
\caption{Example of a Kato datum on the quotient singularity of \refssec{strictnoeff} seen as a sandwiched singularity. Self-intersections on curves are indicated for $Y'$, they are omitted for ($-1$)-curves.}
\label{fig:sandkato}
\end{figure}
\end{center}

\begin{rem}\label{rem:semiconj}
Notice that the germ $\eta \colon (X,x_0) \to (\nC^2,0)$ at $x_0$ gives a semiconjugation between $f=\Kpi \circ \Ksigma \colon (X,x_0) \selfarrow$ and $\wt{f}=\piY \circ \sigmaY \colon (\nC^2,0) \selfarrow$.
\end{rem}

\subsection{A strategy to prove that selfsimilar singularities are sandwiched}

Let now $(X,x_0)$ be a normal surface singularity admitting a strict germ $f \colon (X,x_0) \selfarrow$.
By \refthm{strictkato}, $f$ is a Kato germ: it decomposes as $f=\Kpi \circ \Ksigma$, where $\Kpi\colon X'\to (X,x_0)$ is a non-trivial modification, and $\Ksigma \colon (X,x_0) \to (X',x_1)$ is a local isomorphism.

Let $\mu \colon Z \to (X,x_0)$ be the minimal good resolution of $(X,x_0)$ (or in fact any good resolution).
While $\mu^{-1}\colon X \dashrightarrow Z$ has clearly an indeterminacy point at $x_0$, the lift $\Phi:= \mu^{-1} \circ \Kpi: X' \dashrightarrow Z$ might be regular at $x_1$.
If this is the case, then we are done, since $(X,x_0) \cong (X',x_1)$ would dominate, via $\Phi$, the smooth model $Z$.

While this does not happen in general, it is natural to replace the germ $f$ with an iterate $f^n$, and consider a Kato datum $(\Kpi^{(n)},\Ksigma^{(n)})$ so that $f^n = \Kpi^{(n)} \circ \Ksigma^{(n)}$.

\subsection{Composition of Kato data}

Given two Kato data $(\Kpi_1,\Ksigma_1)$ and $(\Kpi_2, \Ksigma_2)$ associated to Kato germs $f_1$, $f_2$ on the same singularity $(X,x_0)$, one can construct a new Kato datum $(\Kpi,\Ksigma)$ so that $\Kpi \circ \Ksigma = f_1 \circ f_2$, as follows.

\begin{itemize}
\item We construct the space $X''$, that will be the total space of the Kato datum over $(\Kpi,\Ksigma)$, as:
$$
X'':= \nfrac{X'_1 \setminus \Ksigma_1(X) \sqcup X'_2}{\Ksigma_1 \circ \Kpi_2 \colon \partial X'_2 \to \Ksigma_1(\partial X)}\text{.}
$$
\item We denote by $\Ksigma'_2\colon\Ksigma_1(X) \subseteq X'_1 \to X''$ the map $\Ksigma'_2 :=  \Ksigma_2 \circ \Ksigma_1^{-1}$ (up to the natural identification between $X'_2$ and its copy inside $X''$). 
\item We define the map $\Kpi'_2 \colon X'' \to X'_1$ as:
$$
\Kpi'_2 \equiv
\begin{cases}
\id &\text{su } X'_1 \setminus \Ksigma_1(X)\text{,}\\
\Ksigma_1 \circ \Kpi_2 &\text{su } X'_2\text{,}
\end{cases}
$$
which is a bimeromorphic map, that is a local isomorphism outside the exceptional divisor of $\Kpi_2$ in (the copy of $X'_2$).
\end{itemize}
The composition $\Ksigma:= \Ksigma'_2 \circ \Ksigma_1$ is a well defined local isomorphism.
Analogously, the map $\Kpi:=\Kpi_1 \circ \Kpi'_2$ is a bimeromorphic map from $X''$ to $X$.
By construction, we also have $\Kpi(\Ksigma(x_0))=x_0$, and the pair $(\Kpi,\Ksigma)$ is a Kato datum over $(X,x_0)$ (with $\Kpi$ that is a local isomorphism if and only if both $\Kpi_1$ and $\Kpi_2$ are).

The situation is summarised by the diagram:
$$
\xymatrix@C=8mm@R=12mm
{
& X'' \ar@{->}[dl]^{\Kpi'_2} \ar[dd]^\Kpi& \\
X'_1 \ar@{->}[dr]^{\Kpi_1}\ar@/^1pc/@{->}[ur]^{\Ksigma'_2} & & X'_2\ar@{->}[dl]_{\Kpi_2}\\
& (X,x_0)\ar@/^1pc/@{->}[ul]^{\Ksigma_1} \ar@/_1pc/@{->}[ur]_{\Ksigma_2} \ar@/^1pc/@{->}[uu]^{\Ksigma} 
}
$$
A direct computation yields
$$
\Kpi \circ \Ksigma 
= \Kpi_1 \circ \Kpi'_2 \circ \Ksigma'_2 \circ \Ksigma_1
= \Kpi_1 \circ \Ksigma_1 \circ \Kpi_2 \circ \Ksigma_2 \circ \Ksigma_1^{-1} \circ \Ksigma_1
= f_1 \circ f_2\text{.}
$$

\subsection{Proof of \refthm{strictsandwiched} assuming the eigenvaluation is not divisorial}

Consider the tower of Kato data $(\Kpi^{(n)}, \Ksigma^{(n)})$ associated to $f^n$, and set $\Phi_n:= \mu^{-1} \circ \Kpi^{(n)} \colon X^{(n)} \dashrightarrow Z$.
Our goal is to show that $\Phi_n$ is regular at $x_n := \Ksigma^{(n)}(x_0)$ for $n \gg 0$.
The situation can be summarized by the following diagram:
\[
\xymatrix@C=15mm@R=6mm
{
{(X^{(n)},x_n)}\ar@{->}[d]^{\Kpi_n} \ar@{->}@[red][]!<2ex,-2ex>;[ddddr]^{\Phi_n\  \textcolor{red}{?}} & \\
\phantom{*}{\vdots} \phantom{*}\ar@{->}[d]^{\Kpi_3} \ar@/^1pc/@{->}[u]^{\sigma_n}& \\
(X'',x_2) \ar@{->}[d]^{\Kpi_2} \ar@/^1pc/@{->}[u]^{\Ksigma_3} \ar@{-->}[ddr]^{\Phi_2}& \\
(X',x_1) \ar@{->}[dd]^{\Kpi} \ar@/^1pc/@{->}[u]^{\Ksigma_2} \ar@{-->}[dr]^{\Phi} & \\
& Z \ar@{->}[dl]_-{\mu} \\
(X,x_0)\ar@/^1pc/@{->}[uu]^{\Ksigma} 
}
\]
where $\Ksigma^{(n)}:= \Ksigma_n \circ \cdots \circ \Ksigma_2 \circ \Ksigma$ and $\Kpi^{(n)}:= \Kpi \circ \Kpi_2 \circ \cdots \circ \Kpi_n$.

By \refprop{valcritproperness}, we have that $\Phi_n$ is regular at $x_n$ if and only if
$
U_{\Kpi^{(n)}}(x_n)
$
is contained in a connected component of $\NVal[X] \setminus \mc{S}_\mu^*$, or equivalently, if and only if
$$
U_{\Kpi^{(n)}}(x_n) \cap \mc{S}_\mu^\star = \emptyset\text{.}
$$
Notice that by construction we have $f^n_\bullet (\NVal[X]) = \overline{U_{\Kpi^{(n)}}(x_n)}\text{.}$

We now conclude the proof of \refthm{strictsandwiched}, assuming that the eigenvaluation $\nu_\star$ is not divisorial. 

\begin{prop}\label{prop:selfsimilarifeigenvalnotdiv}
In the situation above, suppose that the eigenvaluation $\nu_\star$ of $f$ does not belong to $\mc{S}_\mu^*$. Then there exists $n \gg 0$ such that $f^n_\bullet(\NVal[X]) \cap \mc{S}_\mu^* = \emptyset$.
\end{prop}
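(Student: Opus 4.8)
The plan is to track the nested family of closed sets $T_n:=f^n_\bullet(\NVal[X])$, which, as noted just above, coincides with $\overline{U_{\Kpi^{(n)}}(x_n)}$. Each $T_n$ is weakly compact and connected (a continuous image of the compact $\nR$-tree $\NVal[X]$), the $T_n$ are decreasing, and $\nu_\star\in T_n$ for all $n$ since $f_\bullet\nu_\star=\nu_\star$. Setting $T_\infty:=\bigcap_n T_n$, one checks that $f_\bullet(T_\infty)=T_\infty$ and, since $f_\bullet$ is injective (\refcor{strictvalinj}) and $T_\infty$ is compact, that $f_\bullet$ restricts to a homeomorphism of $T_\infty$. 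Because $\nu_\star\notin\mc{S}_\mu^*$, its center $q_\star:=\cen_\mu(\nu_\star)$ is a closed point of $\mu^{-1}(x_0)$; recall that $\NVal[X]\setminus\mc{S}_\mu^*$ is the disjoint union of the weakly open sets $U_\mu(q)$ over closed points $q$, that $\nu_\star\in U_\mu(q_\star)$, and that the frontier of $U_\mu(q_\star)$ is the set of the one or two divisorial valuations $\nu_E$ with $q_\star\in E$, all lying in $\mc{S}_\mu^*$. For each $n$, connectedness of $T_n$ and $\nu_\star\in T_n\cap U_\mu(q_\star)$ give the alternative: either $T_n\subseteq U_\mu(q_\star)$, whence $T_n\cap\mc{S}_\mu^*=\varnothing$ and we are done, or $T_n$ meets the frontier of $U_\mu(q_\star)$. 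If the second case held for every $n$, then, the frontier being finite and the $T_n$ decreasing, one of those finitely many $\nu_E$ would lie in $T_\infty$. Using that $\nu_\star$ itself is not divisorial, the proposition is therefore equivalent to the assertion that $T_\infty$ contains no divisorial valuation.

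So suppose for contradiction that some divisorial $\nu_E\in T_\infty$, hence $\nu_E\neq\nu_\star$. Using the homeomorphism $f_\bullet|_{T_\infty}$ we obtain a full backward orbit $(\mu_n)_{n\geq 0}\subseteq T_\infty$ with $\mu_0=\nu_E$ and $f_\bullet\mu_{n+1}=\mu_n$. These $\mu_n$ are pairwise distinct: a repetition would make $\nu_E$ periodic under $f_\bullet$, hence a fixed quasimonomial valuation of some iterate $f^k$, hence equal to the eigenvaluation of $f^k$, which is $\nu_\star$ by \refthm{strictvaldyn} — contradicting that $\nu_\star$ is not divisorial. Moreover, by \refprop{propertiesfbullet}, tracing the chain backwards from the divisorial $\mu_0$, each $\mu_n$ is either divisorial or a curve semivaluation, and as soon as one term is a curve semivaluation all later ones are; so either all $\mu_n$ are divisorial, or $\mu_n$ is a curve semivaluation for every $n\geq n_0$. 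The task is to contradict the existence of such an infinite orbit inside the weakly compact $T_\infty$.

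This last point is the heart of the matter. By \refthm{strictkato} we may write $f=\Kpi\circ\Ksigma$ with $\Kpi$ a modification and $\Ksigma$ a local isomorphism; since a modification and a local isomorphism each push divisorial valuations forward without ramification, every ramification index of $f_\bullet$ equals $1$, and likewise for every iterate. In the divisorial case, writing $\mu_n=\nu_{D_n}$, this yields $(f^n)_*\operatorname{ord}_{D_n}=\operatorname{ord}_E$, hence $c_n\cdot\operatorname{ord}_{D_n}(\mf{m}_X)\leq\operatorname{ord}_{D_n}\bigl((f^n)^*\mf{m}_X\bigr)=b_E$ with $b_E=\operatorname{ord}_E(\mf{m}_X)$ and $c_n$ the largest real with $(f^n)^*\mf{m}_X$ integrally contained in $\mf{m}_X^{c_n}$. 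The sequence $(c_n)$ is supermultiplicative, so $c_n^{1/n}$ converges to an asymptotic multiplicity $c_\infty(f)\geq 1$; when $c_\infty(f)>1$ we get $\operatorname{ord}_{D_n}(\mf{m}_X)\leq b_E/c_n<1$ for $n$ large, which is absurd since $\operatorname{ord}_{D_n}(\mf m_X)\ge 1$. When $c_\infty(f)=1$, and in the curve-semivaluation case, log-discrepancies alone do not suffice: here one combines the Jacobian formula \refprop{jacobian_formula} (telescoped along the orbit, together with the lower semicontinuity — hence boundedness below — of $\Kld_X$ on the weakly compact $\NVal[X]$) with the decisive input that the angular distance $\rho_X$ of \refthm{strictvaldyn} is contracted by $f_\bullet$, strictly on quasimonomial pairs for a strict germ, and with its quantitative (Izumi-type) refinement; this forces the backward orbit $(\mu_n)$ to drift to the ``$\rho_X$-boundary'' of $\NVal[X]$ so that its weak limit points in $T_\infty$ are non-quasimonomial, after which the Gignac--Ruggiero description of the dynamics of $f_\bullet$ rules such limit points out. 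Making these $c_\infty(f)=1$ and curve-semivaluation cases rigorous — i.e.\ disentangling the weak topology from the angular metric — is the principal obstacle; the reduction of the first two paragraphs and the case $c_\infty(f)>1$ are essentially formal, and granting the obstacle the proposition follows.
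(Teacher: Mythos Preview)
Your reduction in the first two paragraphs is clean, but the third paragraph is not a proof: you explicitly flag the cases $c_\infty(f)=1$ and the curve-semivaluation backward orbit as ``the principal obstacle'' and then assume it away. That is a genuine gap, and the vague appeal to angular-distance contraction plus ``Izumi-type refinements'' does not close it; strict contraction of $\rho_X$ on quasimonomial pairs gives you nothing about a backward orbit drifting to any particular region, and your assertion about weak limit points being non-quasimonomial is unsupported. Even the $c_\infty(f)>1$ subcase is shakier than you suggest: supermultiplicativity of $c_n$ does not by itself force $c_n\to\infty$ in finitely many steps, and you never justify $c_\infty(f)>1$ in the first place.

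The paper avoids this entire backward-orbit analysis. The missing idea is to run the argument \emph{forwards} on the finite set $\mc{S}_\mu^*$ and then use the Kato structure once. By \refthm{strictvaldyn}, each of the finitely many $\nu_E\in\mc{S}_\mu^*$ satisfies $f_\bullet^n\nu_E\to\nu_\star$, so for some $n$ all of them land in the open $U=U_\mu(q_\star)$; replace $f$ by $f^n$ so that $S:=f_\bullet(\mc{S}_\mu^*)\subseteq U$ is disjoint from $\mc{S}_\mu^*$. Now lift the Kato datum $(\Kpi,\Ksigma)$ through $\mu$: glue $X'\setminus\Ksigma(X)$ with a copy of $Z$ to get a model $Z'$ over $(X,x_0)$ whose skeleton $\mc{S}_{\mu\circ\piZ}^*$ contains $\mc{S}_\mu^*$, and such that the vertices of this larger skeleton lying in $f_\bullet(\NVal[X])=\overline{U_\Kpi(x_1)}$ are exactly $S$. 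Then $f_\bullet(\NVal[X])$ sits in the closure of a single component of $\NVal[X]\setminus(\mc{S}_{\mu\circ\piZ}^*\setminus S)$, and since $\mc{S}_\mu^*\subseteq\mc{S}_{\mu\circ\piZ}^*\setminus S$, we get $f_\bullet(\NVal[X])\cap\mc{S}_\mu^*=\varnothing$ directly. No analysis of $T_\infty$, no backward orbits, no Jacobian or Izumi estimates are needed; the finiteness of $\mc{S}_\mu^*$ and one application of the Kato lift do all the work.
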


\newcommand{\piZ}{\hat{\Kpi}}
\newcommand{\sigmaZ}{\hat{\Ksigma}}

\begin{proof}
Let $(\Kpi,\Ksigma)$ be a Kato datum over $(X,x_0)$, and let $\mu\colon Z \to (X,x_0)$ be a good resolution as in the statement.
Let $U$ be the connected component of $\NVal[X]\setminus \mc{S}_\mu^*$ containing $\nu_\star$. By \refthm{strictvaldyn}, there exists $n \gg 1$ such that $f_\bullet^n(\nu_E) \in U$ for any $E \in \dgvert{\mu}$.
Up to replacing $(f,\Kpi,\Ksigma)$ by $(f^n,\Kpi^{(n)}, \Ksigma^{(n)})$, we may assume $n = 1$.

We resolve the singularities at $x_0$ and $x_1=\Ksigma(x_0)$ simultaneously, by lifting the 
Kato datum $(\Kpi,\Ksigma)$, with respect to a resolution $\mu \colon Z \to (X,x_0)$, as follows.
\begin{itemize}
\item We construct the space $Z'$, by gluing together $X' \setminus \Ksigma(X)$ and a copy of $Z$ via the natural identification $\Ksigma \circ \mu$: 
\[
Z':= \nfrac{X' \setminus \Ksigma(X) \sqcup Z}{\Ksigma \circ \mu \colon \partial Z\to \Ksigma(\partial X)}\text{.}
\]
\item We denote by $\sigmaZ \colon Z \to Z'$ the natural identification between $Z$ and its copy inside $Z'$.
\item We define the map $\piZ \colon Z' \to Z$ as:
\[
\piZ \equiv
\begin{cases}
\id &\text{on } X' \setminus \Ksigma(X)\text{,}\\
\Ksigma \circ \mu &\text{on } Z\text{.}
\end{cases}
\]
\end{itemize}

Notice that $\sigmaZ$ is an isomorphism, while $\piZ$ is a proper bimeromorphic map. In particular, $\mu \circ \piZ$ dominates $\mu$.
In terms of skeleta, this condition is equivalent to asking that $\mc{S}_\mu^* \subseteq \mc{S}_{\mu \circ \piZ}^*$.
Set $S:=f_\bullet \mc{S}_\mu^*$.
But then, $f_\bullet(\NVal[X])$ is contained in the closure of the connected component of $\NVal[X] \setminus \big(\mc{S}_{\mu \circ \piZ}^* \setminus S\big)$ containing $S$.
Since by assumption $S \cap \mc{S}_\mu^* = \emptyset$, we deduce that $U_\Kpi(x_1) \subseteq \NVal[X]\setminus \mc{S}_\mu^*$, and we are done.
\end{proof}

\section{The eigenvaluation $\nu_\star$ is not divisorial}\label{sec:notdivi}

\subsection{The regular case}

The goal of this section is to prove the following result, which concludes the proof of \refthm{strictsandwiched}.

\begin{thm}\label{thm:nustarnotdivisorial}
Let $f\colon(X,x_0) \selfarrow $ be a strict selfmap of a normal surface singularity.
Then the eigenvaluation $\nu_\star$ of $f$ is not divisorial.
\end{thm}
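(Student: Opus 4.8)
The plan is to argue by contradiction, assuming $\nu_\star$ is divisorial. Since $f_\bullet^n\nu\to\nu_\star$ for every quasimonomial $\nu$ and $f_\bullet$ is continuous, $\nu_\star$ is a fixed point of $f_\bullet$; write $\nu_\star=\nu_E$ and fix a good resolution $\pi\colon X_\pi\to (X,x_0)$ with $E\in\dgvert{\pi}$. By \refprop{imagedivi}, applied with source and target resolution both equal to $\pi$ (since $f_\bullet\nu_E=\nu_E$), the lift $\hat f\colon X_\pi\dashrightarrow X_\pi$ satisfies $\hat f|_E(E)=E$ and $f_*\divi_E=k_E\,\divi_E$, where $k_E\in\nZ_{\geq 1}$ is the ramification index of $\hat f|_E$ along $E$; comparing with $f_*\nu_\star=c\,\nu_\star$, where $c:=\nu_\star(f^*\mf{m}_X)$, this gives $c=k_E$. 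Feeding this into the Jacobian Formula \refprop{jacobian_formula} applied to $\nu_\star$, and using homogeneity of $\Kld_X$, one gets the identity
\[
(k_E-1)\,\Kld_X(\nu_\star)=\nu_\star(\JD{f})\text.
\]

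In the regular case $(X,x_0)=(\nC^2,0)$ this already does most of the work: here $\JD{f}$ is effective and nontrivial, so $\nu_\star(\JD{f})>0$, while $\Kld(\nu_\star)>0$, which forces $k_E=c\geq 2$. It then remains to exclude $k_E\geq 2$, and this is where I expect the main difficulty. I would exploit that $\nu_\star$ attracts \emph{all} quasimonomial valuations: tangent directions at $\nu_E$ in the valuative tree are in bijection with the closed points of $E$ in $X_\pi$, $f_\bullet$ acts on them through the self-map $\hat f|_E\colon E\to E$, and the ``multiplier'' of $f_\bullet^\ell$ in the direction of an $\hat f|_E$-periodic point of period $\ell$ equals the product of the local degrees of $\hat f|_E$ along the cycle divided by $k_E^{\ell}$. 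For $\nu_\star$ to attract the monomial valuation $\nu_{\mf{m}}$ — whose iterates approach $\nu_E$ along one such periodic direction — this multiplier must be $<1$; on the other hand \refprop{imagedivi}, applied to the divisorial valuations sitting on that cycle, ties the local degrees of $\hat f|_E$ back to $k_E$ and should leave $\geq 1$ as the only possibility, contradicting $k_E\geq 2$. A variant I would also try is to resolve $\hat f$ to a morphism and play $E^2<0$ off the projection formula $\hat f^*E\cdot E=(\deg\hat f|_E)\,E^2$ together with the relation $\delta(f)=k_E\cdot\deg\hat f|_E$ — the latter holding because, $f_\bullet$ being injective (\refcor{strictvalinj}), no divisor other than $E$ is mapped onto $E$.

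For a general normal surface singularity the same computation yields the displayed identity, but $\JD{f}$ need not be effective, so already the case $k_E=1$ — where the identity reads $\nu_\star(\JD{f})=0$ — is no longer immediate. Here I would bring in the geometry of the Kato surface $S_f$ attached to the Kato datum $f=\Kpi\circ\Ksigma$ (available by \refthm{strictkato}): after blowing up enough so that $E$ is realised, $\nu_\star=\nu_E$ contributes a compact curve $\overline{E}$ on (a resolution of) $S_f$, and the fact that $\nu_\star$ is $f_\bullet$-fixed with multiplier $c=k_E$ should translate into a self-intersection identity forcing $\overline{E}^2\geq 0$ — which is impossible, since on a Kato surface the classes of curves span a negative definite sublattice of $H^2$. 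The main obstacle, as I see it, is making this translation rigorous: tracking which components of a resolution of $(X,x_0)$ persist as curves on $S_f$, controlling the blow-ups needed to realise $E$ and their effect on $\overline{E}^2$, and matching the Kato-surface intersection numbers with $k_E$, $c$ and the angular distance $\rho_X$ governing the dynamics in \cite{gignac-ruggiero:locdynnoninvnormsurfsing}.
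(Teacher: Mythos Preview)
Your main gap is at the very first step: for a strict germ one has $k_E=1$ automatically, so there is no case $k_E\geq 2$ to exclude. Indeed, since $\topdeg f=1$, any lift $\hat f\colon X_\pi\dashrightarrow X_\pi$ is bimeromorphic; near a generic point of $E$ it is a local isomorphism onto its image, so the coefficient of $E$ in $\hat f^*E$ equals $1$. (Equivalently, from the Kato decomposition $f=\Kpi\circ\Ksigma$: both $\Ksigma_*$ and $\Kpi_*$ send $\divi_E$ to another $\divi_{E'}$ with trivial multiplier.) With $k_E=1$ your identity becomes $\nu_\star(\JD{f})=0$, and in the regular case this is an immediate contradiction with $\JD{f}$ effective and nontrivial --- exactly the paper's proof. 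Your whole programme to rule out $k_E\geq 2$ via periodic tangent directions is unnecessary, and as written it is only a sketch (``should leave $\geq 1$ as the only possibility'').

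For the singular case your Kato-surface plan is in the right spirit but both harder and vaguer than what is needed. The paper's argument is elementary and purely local. Lift the Kato datum through a good resolution $\mu\colon Z\to(X,x_0)$ with $E\in\dgvert{\mu}$ (and at least one other prime), obtaining a composition of point blow-ups $\piZ\colon Z'\to Z$ and an embedding $\sigmaZ\colon Z\hookrightarrow Z'$. Since $f_\bullet\nu_E=\nu_E$, one has $\sigmaZ(E)=E'$, the strict transform of $E$ in $Z'$; since $\sigmaZ$ is an isomorphism near $E$, $E'\cdot E'=E\cdot E$, so $\piZ$ blows up no point of $E$. Hence the neighbours $L_1(E)=\{D\in\dgvert{\mu}\setminus\{E\}:D\cap E\neq\emptyset\}$ have strict transforms still meeting $E'$, and $\sigmaZ$ permutes them: the finite set $\{\nu_D:D\in L_1(E)\}$ is $f_\bullet$-invariant. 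An iterate of $f_\bullet$ then fixes some $\nu_D\neq\nu_E$, contradicting the \emph{uniqueness} of the eigenvaluation in \refthm{strictvaldyn}. So self-intersection is indeed the key, but it is used inside the resolution rather than on the Kato surface, and the contradiction comes from uniqueness of $\nu_\star$, not from negative definiteness of $H^2$.
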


We start by presenting a proof of \refthm{nustarnotdivisorial} that works when $(X,x_0) \cong (\nC^2,0)$ is regular, based on the Jacobian formula, that governs how $f_*$ acts with respect to log-discrepancy.

\begin{proof}[Proof of \refthm{nustarnotdivisorial} in the regular case]
Suppose now by contradiction that the eigenvaluation $\nu_\star$ is divisorial: $\nu_\star = \nu_{E_\star}$ for an exceptional prime $E_\star$.

By \refprop{imagedivi} we have that $f_*\nu_{E_\star} = \kk{E_\star} \nu_{E_\star}$.
Since $f$ is strict, we also have $\kk{E_\star} = 1$, hence $f_*\nu_{E_\star} = \nu_{E_\star}$.
We deduce from the Jacobian formula \refeqn{jacobian_formula} that
\[
\Kld(\nu_\star) = \Kld(\nu_\star) + \nu_\star(\JD{f})\text{.}
\] 
Being $\nu_\star$ divisorial, we have that $\Kld(\nu_\star)$ is finite; being $(X,x_0) \cong (\nC^2,0)$ regular, we have that $\JD{f}$ is effective, and $\nu_\star(\JD{f})>0$: a contradiction.

\end{proof}

Unfortunately, this proof does not carry on to the singular setting, where the Jacobian divisor $\JD{f}$ is not anymore effective (see \refssec{jacobianformula} and \refssec{strictnoeff}).


\subsection{The singular case}

We present here a proof of \refthm{nustarnotdivisorial} when $(X,x_0)$ is an arbitrary normal surface singularity. This proof is an adaptation of the arguments used in \cite{fantini-favre-ruggiero:sandwichselfsim}, where we take advantage of the informations given by \refthm{strictvaldyn}. It requires \refthm{strictkato} in order to reduce to the case of a Kato germ $f=\Kpi \circ \Ksigma$.

\begin{proof}[Proof of \refthm{nustarnotdivisorial} in the singular case.] 
Let $f \colon (X,x_0) \selfarrow$ be a strict germ, and suppose by contradiction that its eigenvaluation is divisorial: $\nu_\star=\nu_E$, for some $E \in \dgvert{\mu}$, with $\mu \colon Z \to (X,x_0)$ a good resolution of $(X,x_0)$.
Up to further blow-ups, we may assume that $E$ is not the only exceptional prime in $\dgvert{\mu}$.

By \refthm{strictkato}, $f$ is a Kato germ, associated to a Kato datum $(\Kpi,\Ksigma)$.
As in the proof of \refprop{selfsimilarifeigenvalnotdiv}, we lift this Kato datum with respect to $\mu$, and get a modification $\piZ \colon Z' \to Z$, which is a composition of point blow-ups (being $Z$ and $Z'$ regular), and an embedding $\sigmaZ \colon Z \rightarrow Z'$ that fit the following commutative diagram:
$$
\xymatrix@C=10mm@R=5mm
{
& Z' \ar@{->}[dl]_-{\mu'} \ar@{->}[dd]^{\piZ} \\
(X',x_1) \ar@{->}[dd]^{\Kpi} & \\
& Z \ar@{->}[dl]_-{\mu} \ar@/^1pc/@{->}[uu]^{\sigmaZ}\\
(X,x_0)\ar@/^1pc/@{->}[uu]^{\Ksigma} 
}
$$
Since $f_\bullet \nu_E = \nu_E$, we must have that $\sigmaZ(E)=E'$ the strict transform of $E$ in $Z'$.
Being $\sigmaZ$ an isomorphism defined on a neighborhood of $E$, we have that $E \cdot E = E' \cdot E'$, and we deduce that $\piZ$ does not blow-up any point in $E$.

We deduce that any exceptional prime $L_1(E):=\{D \in \dgvert{\mu}\setminus \{E\}\ |\ D \cap E \neq \emptyset\}$ is sent by $\sigmaZ$ to the strict transform of another exceptional prime in $L_1(E)$.
In other terms, $\{\nu_D\ |\ D \in L_1(E)\}$ is $f_\bullet$-invariant.
By assumption, $L_1(E)$ contains at least one component $D$, and up to replacing $f$ by an iterate, we get that $f_\bullet \nu_D = \nu_D$.
This contradicts the uniqueness of the eigenvaluation $\nu_\star = \nu_E$.
\end{proof}

\subsection{A strict germ with non-effective Jacobian divisor}\label{ssec:strictnoeff}

For any integer $k \geq 2$, we consider the quotient singularity $(X,x_0)$ whose minimal resolution $\mu \colon Z \to (X,x_0)$ has a chain of three rational curves $E_1, E_2, E_3$, of self-intersection $(-3,-2,-k)$.
From the continued fraction
$$
3-\frac{1}{2-\frac{1}{k}} = \frac{5k-3}{2k-1}\text{,}
$$
we deduce that $(X,x_0)$ is of type $\frac{1}{5k-3} (1,2k-1)$, i.e., setting $p=5k-3$ and $q=2k-1$, we have that $(X,x_0)$ it is isomorphic to $\nfrac{\nC^2}{G}$ with $G=\nfrac{\nZ}{p\nZ}$, acting on $\nC^2$ as $(z,w) \mapsto (\zeta z, \zeta^q w)$, with $\zeta$ a $p$-th root of unity.

If we denote by $M$ the intersection matrix associated ot the minimal resolution, we get
\[
M=\begin{pmatrix}
-3 & 1 & 0\\
 1 &-2 & 1\\
 0 & 1 & -k
\end{pmatrix}
\text{,}
\qquad
M^{-1}=
- \frac{1}{p}
\begin{pmatrix}
2k-1& k  & 1\\
 k 	& 3k & 3\\
 1 	& 3  & 5
\end{pmatrix}
\text{,}
\]
and from \refeqn{computeldadj} we deduce
\[
\Kld(E_1)=\frac{2k}{p},\qquad \Kld(E_2)=\frac{k+3}{p},\qquad \Kld(E_3)=\frac{6}{p}\text{.}
\]
Notice that $\Kld(E_j)<1$ for any $j$, and we deduce that $(X,x_0)$ is not canonical.

Let now $\piZ \colon Z' \to Z$ be the following sequence of blow-ups.
First, we blow-up a free point in $E_2$, obtaining the new exceptional prime $E'_3$. Then blow-up a free point in $E'_3$, obtaining the new exceptional prime $E'_4$. Finally we blow-up $k-1$ free points in $E'_4$, obtaining exceptional primes $D'_1, \ldots, D'_{k-1}$.

The exceptional primes $E_2, E'_3, E'_4$ form a chain of rational curves of self-intersection $(-3,-2,-k)$. Being cyclic quotient singularities taut, there exists an isomorphism $\sigmaZ \colon U \to U'$ from a neighborhood $U$ of $E:=E_1 \cup E_2 \cup E_3$ in $Z$ to a neighborhood of $E':=E_2 \cup E'_3 \cup E'_4$ in $Z'$.
By contracting $E$ we get the original quotient singularity $(X,x_0)$, while contracting $E'$ gives a normal surface $X'$, with a quotient singularity at $x_1$. By construction, $\piZ$ descends to a modification $\Kpi \colon X' \to (X,x_0)$, and $\sigmaZ$ induces a local isomorphism $\Ksigma \colon (X,x_0) \to (X',x_1)$.
Hence we get a Kato germ $f = \Kpi \circ \Ksigma \colon (X,x_0) \selfarrow$ (see \reffig{quot}).

\begin{center}
\begin{figure}[htb]
\centering
\def\svgwidth{.65\columnwidth}\input{{\immdir}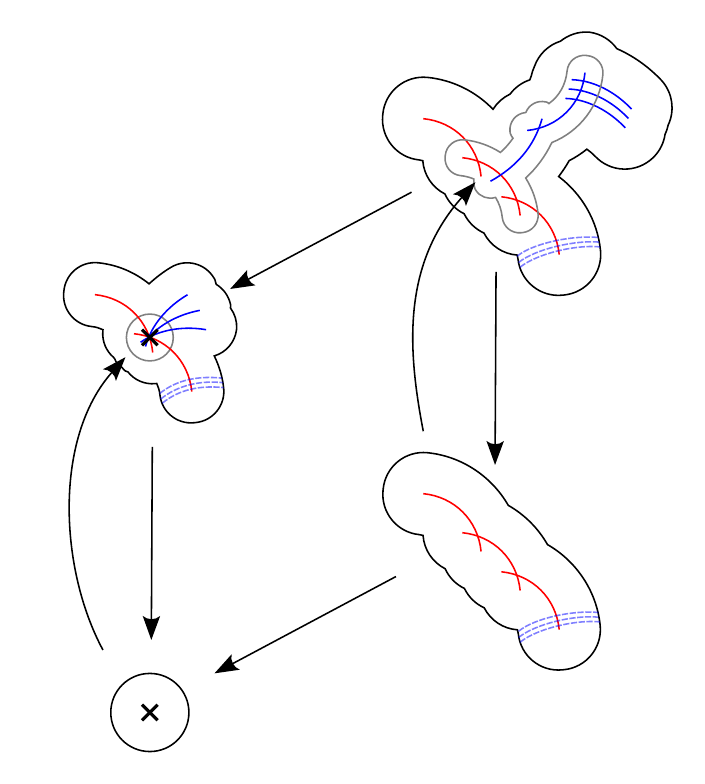_tex}
\hfill
\raisebox{3cm}{
\def\svgwidth{.32\columnwidth}\input{{\immdir}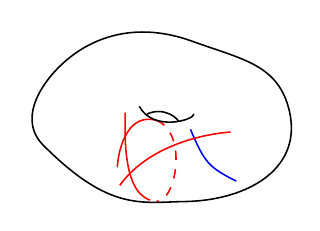_tex}
}
\caption{Kato datum over the quotient singularity of type $\frac{1}{5k-3}(1,2k-1)$.\\
The dashed curves are the critical curves and their strict transforms.
On the right side, the associated Kato surface $S$.
Self-intersections on curves are indicated for $Z$, $Z'$ and $S$, they are omitted for ($-1$)-curves.
}\label{fig:quot}
\end{figure}
\end{center}

By construction, we have that $f_* \ord_{E_1} = \ord_{E_2}$.
We deduce that
\[
\Kld(f_* \ord_{E_1}) - \Kld(\ord_{E_1}) = \Kld(\ord_{E_2})-\Kld(\ord_{E_1}) = -\frac{k-3}{p}\text{.}   
\]
In particular, this difference is positive when $k=2$, equals $0$ when $k=3$, and is negative when $k \geq 4$.

The Jacobian divisor $\JD{f}$ is supported on the contracter curves, whose strict transforms are the preimages by $\hat{\sigma}$ of the germs of $E_1$, $E_3$ and $D'_{j}$ at $E'$ for $j=1, \ldots, k-1$.
We denote such curves as $C_1, C_3$ and $D_j$ for $j=1, \ldots, k-1$ respectively.
Set $D=D_1+\cdots + D_{k-1}$. From the discussion above we deduce that
\[
\JD{f} = b_1 C_1 + b_3 C_3 + b D
\]
for suitable $b_1,b_3, b \in \nQ$.
We can compute these coefficients by evaluating the Jacobian formula \refeqn{jacobian_formula} at valuations whose normalization lies in the segments $[\nu_{E_1},\nu_{C_1}]$, $[\nu_{E_1}, \nu_{C_3}]$ and $[\nu_{E_3}, \nu_{D_j}]$ respectively.
For example, we have
\begin{align*}
b_1 =& \Kld(f_* \ord_{E_{1,1}}) - \Kld(f_*\ord_{E_1}) - \big(\Kld(\ord_{E_{1,1}}) - \Kld(\ord_{E_1})\big) \\
=& \big(\Kld(E_1)+\Kld(E_2)\big) - \Kld(E_2) - \big(\Kld(E_1) +1 - \Kld(E_1)\big)\\
=& \Kld(E_1)-1\text{,}
\end{align*}
and similarly
\[
b_3= \Kld(E_3) -1 \text{,}\qquad
b = \Kld(E_2) +2\text{.}
\]

\subsection{Remarks on Kato surfaces and eigenvaluations}

Let $f \colon (X,x_0) \selfarrow$ be a Kato germ that is also \emph{contracting}. In this case we can take representatives of the germs of the modificatoin $\Kpi \colon X' \to (X,x_0)$ and the local isomorphism $\Ksigma \colon (X,x_0) \to (X',x_1)$
so that $\Ksigma \colon X \to \Ksigma(X) \subrelcpct X'=\Kpi^{-1}(X)$ is an isomorphism.
Then we can construct the compact smooth surface
\[
S=S(\Kpi,\Ksigma):=\nfrac{X' \setminus \Ksigma(X)}{\Ksigma \circ \Kpi \colon \partial X' \to \Ksigma(\partial X)}\text{,}
\]
called the \emph{Kato surface} associated to the Kato datum $(\Kpi,\Ksigma)$.
Notice that, given the Kato germ $f$, the isomorphism class of $S$ depends on the Kato datum chosen, but not the bimeromorphic class of $S$, that we denote $S(f)$.
In the regular case $(X,x_0)\cong(\nC^2,0)$, the Kato datum that produces a minimal surface is essentially unique: it is one for which there is only one ($-1$)-curve in $X'$, which contains $x_1$.

Kato surfaces are compact, have negative Kodaira dimension and first Betti number $b_1=1$ (in particular, they are non-K\"ahler), which places them in class \textup{VII} of Kodaira's classification.
Their minimal model has positive second Betti number $b_2>0$ and conjecturally they are the only ones with these properties (see \cite{teleman:towardsclassification} and references therein).

\emph{Regular} Kato surfaces (i.e., associated to a Kato germ defined over $(\nC^2,0)$) have been introduced by Masahide Kato \cite{kato:cptcplxmanifoldsGSS}; their geometry is quite well understood, thanks to the work of Kato, Nakamura, Dloussky, Oeljeklaus, Toma, Teleman, etc.
In the singular setting, \cite{kato:cpctcplxsurfwithGSPH} classifies surfaces admitting a ``global strongly pseudoconvex hypersurfaces", that in our case appear as an embedding of the link of the singularity $(X,x_0)$ inside $S$ (see \cite[Theorems B and C]{fantini-favre-ruggiero:sandwichselfsim}).

We have seen (see \refrem{semiconj}) that any Kato germ is semiconjugated via a dominant bimeromorphic map to a regular Kato germ.
Regular Kato germs form a special class of contracting \emph{rigid} germs (for which the \emph{generalized critical set} $\displaystyle \mc{C}(f^\infty):= \bigcup_{n \in \nN^*}\mc{C}(f^n)$ has simple normal crossings and is $f$-invariant): they are exactly the ones that are strict by \refthm{strictkato}.

\begin{rem}
In higher dimensions, this is not true anymore: there are examples of regular Kato germs with infinitely many irreducible components in $\mc{C}(f^\infty)$.
\end{rem}

\begin{ex}\label{ex:nonrigidkatoSNC}
Let us consider the composition of the following two blow-ups:
\begin{itemize}
\item $\pi_1 \colon X_1 \to (\nC^3,0)$ the blow-up the origin; pick the $x$-chart for which $\pi_1(x_1,y_1,z_1)=(x_1,x_1y_1,x_1z_1)$.
\item $\pi_2 \colon X_2 \to X_1$ the blow-up of the curve $C=\{x=y-z^2=0\}$; in suitable coordinates centered at a point $p$, we write $\pi_2(x_2,y_2,z_2)=(x_2, x_2y_2+z_2^2, z_2)$.
\end{itemize}
Set $\Kpi=\pi_1 \circ \pi_2$.
With respect to the coordinates chosen above, consider $\Ksigma\colon (\nC^3,0) \to (X_2,p)$ given by $(x,y,z) \mapsto (y,x,z)$.
Then $(\Kpi,\Ksigma)$ defines a Kato datum, with associated Kato germ
\[
f(x,y,z)=\big(y,y(xy+z^2),yz\big).
\]
By direct computation we get $\mc{C}(f)=\{y=0\}$, and $\mc{C}(f^n)=\big\{y \prod_{j=1}^{n-1} (xy+jz^2)=0\big\}$. In particular, $f$ is not rigid.
\end{ex}

Contracting rigid germs are classified up to holomorphic conjugacy by Favre's \cite{favre:rigidgerms} (see also \cite{ruggiero:rigidgerms} for a partial result in higher dimensions). For contracting Kato germs, we get the following statement.

\begin{thm}[{\cite[Proposition 1.5 and Table II]{favre:rigidgerms}}]\label{thm:strictFclass}
Any strict germ $f \colon (X,x_0) \selfarrow$ over a normal surface singularity is semiconjugated via a dominant bimeromorphic map to a germ $\wt{f} \colon (\nC^2,0) \selfarrow$ of the following form.
\begin{enumerate}
\item[Class 2:]\[\wt{f}(z,w)=\big(\lambda z, z^c w + P(z)\big)\text{,}\]\vspace{2mm}
where $0 < \abs{\lambda} < 1$, $c \geq 1$, and $P \in z\nC[z]$ with $\deg P \leq c$;  
\item[Class 4:] \[\wt{f}(z,w)=\big(z^a, \mu z^c w + \overbrace{\sum_{k=1}^c a_k x^k}^{P(z)} + \eps z^{\frac{ac}{a-1}}\big)\text{,}\]\vspace{2mm}
where $a \geq 2$, $c \geq 1$, $P\in z \nC[z]$ such that $\gcd\{a, k \leq c\ : \ a_k \neq 0\} = 1$, and either:
\begin{trivlist}
\item[non-special case:] $\eps = 0$, $\mu \in \nC^*$; or
\item[special case:] $\mu=1$, $\frac{ac}{a-1} \in \nN^*$, $\eps \in \nC$.
\end{trivlist}
\item[Class 6:] \[\wt{f}(z,w)=\big(z^a w^b, z^c w^d\big)\text{,}\]\vspace{2mm}
where $A=\begin{pmatrix}a & b \\ c & d\end{pmatrix}$ satisfies $\det A = \pm 1$ and $A^2$ has positive entries. 
\end{enumerate}
\end{thm}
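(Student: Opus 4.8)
The statement combines Favre's classification of contracting rigid germs with the results established above, so the plan is to first reduce an arbitrary strict germ to a regular Kato germ of $(\nC^2,0)$, and then quote the classification, keeping the entries of topological degree $1$.

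\emph{Reduction.} By \refthm{strictkato}, $f=\Kpi\circ\Ksigma$ is a Kato germ. Fix a good resolution $\mu\colon Z\to(X,x_0)$; since $Z$ is smooth, for each $q\in\mu^{-1}(x_0)$ the germ of $\mu$ at $q$ is a dominant bimeromorphic map $(Z,q)\cong(\nC^2,0)\to(X,x_0)$, and the identity $\mu\circ(\mu^{-1}\circ f\circ\mu)=f\circ\mu$ realizes the lift $\mu^{-1}\circ f\circ\mu\colon Z\dashrightarrow Z$ as a candidate semiconjugate. By \refprop{valcritproperness}, this lift is holomorphic at a point $q\in\mu^{-1}(x_0)$ exactly when $f_\bullet(U_\mu(q))$ is contained in some $U_\mu(q')$ with $q'\in\mu^{-1}(x_0)$; and if $f_\bullet(\NVal[X])$ avoids $\skel[\mu]^*$, then, being a continuous image of the connected space $\NVal[X]$, it lands in a single component $U_\mu(q)$ of $\NVal[X]\setminus\skel[\mu]^*$, so the lift is holomorphic at $q$ and fixes it. This fails for $f$ in general but holds for a high iterate: by \refthm{nustarnotdivisorial} the eigenvaluation $\nu_\star$ is not divisorial, hence $\nu_\star\notin\skel[\mu]^*$, so \refprop{selfsimilarifeigenvalnotdiv} (whose hypothesis is thereby satisfied) yields an $n\gg0$ with $f^n_\bullet(\NVal[X])\cap\skel[\mu]^*=\emptyset$; along the way this re-proves \refthm{strictsandwiched}. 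Fixing such an $n$ and localizing $\mu$ at the unique point $q$ with $f^n_\bullet(\NVal[X])\subseteq U_\mu(q)$, we obtain a holomorphic germ $\widetilde f:=\mu^{-1}\circ f^n\circ\mu\colon(\nC^2,0)\selfarrow$ and a dominant bimeromorphic map $\mu$ with $\mu\circ\widetilde f=f^n\circ\mu$. (Getting the semiconjugation for $f$ rather than for an iterate needs the sharper $f_\bullet(\NVal[X])\cap\skel[\mu]^*=\emptyset$, obtained by choosing the model more carefully; I would treat this as a separate bookkeeping matter.)

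\emph{Identifying the model.} Transporting the Kato decomposition $f^n=\Kpi^{(n)}\circ\Ksigma^{(n)}$ through the bimeromorphic $\mu$ displays $\widetilde f$ as a modification of $(\nC^2,0)$ followed by a local isomorphism into it; thus $\widetilde f$ is a regular Kato germ, hence --- as recalled just before the statement --- up to holomorphic conjugacy a contracting rigid germ of $(\nC^2,0)$. Since $\mu$ is bimeromorphic, semiconjugation preserves the topological degree, so $\topdeg\widetilde f=\topdeg f=1$ and $\widetilde f$ is strict. Applying Favre's classification of contracting rigid germs, \cite[Proposition~1.5 and Table~II]{favre:rigidgerms}, and keeping only the models of topological degree $1$, one is left precisely with Classes $2$, $4$ and $6$; the numerical restrictions in the statement are the specialization of Favre's normal forms to the contracting, degree-one case --- for instance $\det A=\pm1$ together with $A^2$ having positive entries in Class $6$, and the $\gcd$ condition on $P$ with the non-special/special alternative for $\eps$ in Class $4$.

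The genuine obstacle is the reduction step --- arranging that the chosen resolution of $(X,x_0)$ is compatible with the dynamics of $f$, so that the lifted map is an honest holomorphic germ and not merely a bimeromorphic one. This is exactly where the dynamical input --- the existence, uniqueness and non-divisoriality of $\nu_\star$ --- and the local $\nR$-tree structure of $\NVal[X]$ are indispensable; once $\widetilde f$ is available as a contracting rigid Kato germ, the remainder is a routine appeal to Favre's tables.
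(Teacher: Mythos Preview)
The paper supplies no proof for this statement: it is quoted from Favre's classification \cite{favre:rigidgerms}, with the semiconjugacy to $(\nC^2,0)$ asserted via \refrem{semiconj} once the sandwiched structure is known. Your reduction --- lifting an iterate of $f$ through a resolution after the skeleton has been pushed off $\skel[\mu]^*$ --- is exactly the mechanism behind the paper's proof of \refthm{strictsandwiched} (namely \refprop{selfsimilarifeigenvalnotdiv} together with \refthm{nustarnotdivisorial}), so your overall strategy matches the paper's implicit one.

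The issue you flag about $f$ versus $f^n$ is a genuine gap, not bookkeeping. Your proposed remedy --- refine $\mu$ until $f_\bullet(\NVal[X])\cap\skel[\mu]^*=\emptyset$ --- cannot work as stated: passing to a finer good resolution only \emph{enlarges} $\skel[\mu]^*$, so if some exceptional prime of the minimal good resolution already lies in $\overline{f_\bullet(\NVal[X])}=\overline{U_\Kpi(x_1)}$, no choice of $\mu$ rescues you. There is also a directional mismatch worth noting: your semiconjugating map is the resolution germ $\mu\colon(\nC^2,0)\to(X,x_0)$ (so that $\mu\circ\wt f=f^n\circ\mu$), whereas the paper's is the sandwiched map $\eta\colon(X,x_0)\to(\nC^2,0)$ going the other way (so that $\eta\circ f=\wt f\circ\eta$). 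It is this latter direction that lets one push the Kato datum $(\Kpi,\Ksigma)$ forward along $\eta$ and obtain a regular Kato germ semiconjugate to $f$ itself rather than to an iterate, as in \refrem{semiconj}; that said, the paper only spells this out for Kato germs \emph{built from} the sandwiched structure (Section~\ref{ssec:sandwichedselfsim}), and does not write down the descent for an arbitrary strict germ either.
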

These three classes can also be characterized by the type of their eigenvaluations: $\wt{f}$ belongs to Class $2$, $4$ or $6$ if and only if its eigenvaluation $\wt{\nu}_\star=\nu_\star(\wt{f})$ is a curve semivaluation, infinitely singular, or irrational respectively.
Since the type of eigenvaluation (being non-divisorial) is preserved by semiconjugacy, the same characterization holds if we look at the type of eigenvaluation $\nu_\star =\nu_\star(f)$ for the original germ $f$.

Kato surfaces are divided into several classes, depending on their geometry (see \cite{dloussky:phdthesis}):
\begin{itemize}
\item \emph{Enoki surfaces}: there is a cycle $D$ of rational curves of self-intersection $D^2=0$ (each irreducible component has a self-intersection $-2$).
This class can be further divided into two subclasses: \emph{parabolic Inoue}, which also admit a non-rational compact curve $E$. This curve is disjoint from $C$, has genus $1$, and self-intersection $E^2=-b_2$, where $b_2$ is the number of irreducible components in the cycle $D$. Surfaces that have not such a compact curve $E$ are called \emph{special Enoki}.

\item \emph{Inoue-Hirzebruch surfaces}: they are characterized by the fact that the set of compact curves forms either one or two disjoint cycles of rational curves with negative self-intersection. The first case is called \emph{half Inoue}, while the second is called \emph{hyperbolic Inoue}.
\item \emph{intermediate Kato surfaces}: the remaining case, for which the compact curves are organized as a cycle of rational curves, with trees of rational curves attached to it.  
\end{itemize}

By looking at the decomposition of the normal forms $\wt{f} \colon (\nC^2,0) \selfarrow$ into Kato data $\wt{f}=\piY \circ \sigmaY$, and consequently to the geometry of the compact curves in the associated Kato surface $S$, one get the following characterization in terms of the type of eigenvaluation $\nu_\star$ of $\wt{f}$, or equivalently of $f$.

\begin{prop}
Let $f\colon (X,x_0) \selfarrow$, and let $S$ be the associated Kato surface.
Let $\nu_\star$ be the eigenvaluation for $f$. Then:
\begin{enumerate}
\item $S$ is a Enoki surface if and only if $\nu_\star=\nu_C$ is a curve semivaluation.
It is parabolic Inoue if and only if the curve $C$ is analytic.
\item $S$ is a Inoue-Hirzebruch surface if and only if $\nu_\star$ is irrational.
It is hyperbolic Inoue (resp., half Inoue) if and only if $f_\bullet$ preserves the connected components of $\mc{V} \setminus \{\nu_\star\}$ (resp., switches the connected components).
\item $S$ is a intermediate Kato surface if and only if $\nu_\star$ is infinitely singular.
\end{enumerate}
\end{prop}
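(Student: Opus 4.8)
The plan is to reduce to the regular case via the semiconjugacy of \refrem{semiconj}, and then to match Favre's trichotomy of normal forms with the classification of Kato surfaces, case by case, by reading off the configuration of compact curves. By \refrem{semiconj} and \refthm{strictFclass}, the contracting Kato germ $f$ is semiconjugated, through a dominant bimeromorphic germ $\eta\colon(X,x_0)\to(\nC^2,0)$, to a regular Kato germ $\wt f$ that may be taken to be a normal form of Class $2$, $4$, or $6$, with $\nu_\star(f)$ a curve semivaluation, infinitely singular, or irrational respectively. Since $\eta$ is itself bimeromorphic it descends to a dominant bimeromorphic map between the orbit spaces of $f$ and of $\wt f$, hence between the Kato surfaces $S=S(f)$ and $S(\wt f)$, which therefore lie in the same bimeromorphic class. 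As the Kato trichotomy (Enoki / Inoue--Hirzebruch / intermediate) and its refinements (presence of a non-rational compact curve; number of cycles of rational curves) are invariants of the minimal model, and hence of the bimeromorphic class, it suffices to determine the configuration of compact curves of $S(\wt f)$ in each of the three classes.

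For each normal form I would exhibit an explicit Kato datum $\wt f=\wt\Kpi\circ\wt\Ksigma$ realizing the minimal surface --- a composition of point blow-ups $\wt\Kpi$ together with a local isomorphism $\wt\Ksigma$ onto a point of its exceptional locus --- obtained by resolving the dynamics along the (possibly formal) branch or the sequence of infinitely near points carrying $\nu_\star$, and then read the compact curves of $S(\wt f)$ off of $\wt\Kpi$ and the gluing $\wt\Ksigma\circ\wt\Kpi$. For the monomial map $\wt f(z,w)=(z^aw^b,z^cw^d)$ of Class $6$ one takes $\wt\Kpi$ to be a toric modification and $\wt\Ksigma$ a toric chart; the exceptional curves then organize, after the gluing, into one or two cycles of rational curves of negative self-intersection, so $S(\wt f)$ is of Inoue--Hirzebruch type, and $\nu_\star$ is the (irrational) monomial valuation attached to the expanding eigendirection of $A$. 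For $\wt f(z,w)=(\lambda z,z^cw+P(z))$ of Class $2$, blowing up the origin repeatedly along the invariant direction produces a cycle of rational curves, each of self-intersection $-2$ and with $D^2=0$ after the gluing, so $S(\wt f)$ is an Enoki surface, with $\nu_\star=\nu_C$ the semivaluation of the invariant branch $C=\{w=\psi(z)\}$, where $\psi(\lambda z)=z^c\psi(z)+P(z)$. For $\wt f(z,w)=(z^a,\mu z^cw+P(z)+\eps z^{ac/(a-1)})$ of Class $4$, the sequence of infinitely near points of $\nu_\star$ alternates infinitely often between free and satellite points (so $\nu_\star$ is infinitely singular), and resolving it produces a cycle of rational curves with non-trivial trees of rational curves attached, so $S(\wt f)$ is an intermediate Kato surface.

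It then remains to settle the finer invariants. In the Enoki case the invariant branch $C$ meets the glued region, so its image in $S(\wt f)$ closes up into a compact curve $E$; since $\wt f|_C$ is conjugate to $z\mapsto\lambda z$ with $0<\abs{\lambda}<1$, this $E$ is the quotient of a punctured disc by a linear contraction, hence elliptic, provided $C$ is convergent, while no such compact curve arises when $C$ is merely a formal branch. Comparing coefficients in $\psi(\lambda z)=z^c\psi(z)+P(z)$ shows that $\psi$ is convergent exactly when $P=0$, which is the asserted equivalence ``parabolic Inoue $\iff$ $C$ analytic'' (the other case being special Enoki). In the Inoue--Hirzebruch case the irrational valuation $\nu_\star$ has exactly two tangent directions in $\NVal[X]$, corresponding to the two eigendirections of $A$; as $A$ is hyperbolic with $\det A=\pm1$, the map $f_\bullet$ preserves these two directions (equivalently, the two connected components of $\NVal[X]\setminus\{\nu_\star\}$) when $\det A=1$ and swaps them when $\det A=-1$. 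In the first case the two tangent directions give rise to two disjoint cycles of rational curves in $S$, so $S$ is hyperbolic Inoue; in the second they are identified, $S$ carries a single cycle, so $S$ is half Inoue.

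The main obstacle is the explicit blow-up bookkeeping of the middle step, and especially the Class $4$ (intermediate) case: one must extract from the coefficients of the normal form the precise tree of rational curves hanging off the cycle, and verify that the Kato datum one writes down is minimal. A secondary point requiring care is the descent of the qualitative invariants along the bimeromorphic semiconjugacy of the first step; here one can, if one prefers, avoid recomputing everything by reading the combinatorial data out of Dloussky's and Favre's classifications rather than from the normal forms directly.
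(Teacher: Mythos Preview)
Your proposal is correct and follows essentially the same route as the paper: the paper does not give a formal proof, but the sentence preceding the proposition (``By looking at the decomposition of the normal forms $\wt{f}$ into Kato data $\wt{f}=\piY \circ \sigmaY$, and consequently to the geometry of the compact curves in the associated Kato surface $S$, one gets the following characterization\ldots'') is exactly the strategy you implement --- reduce to the regular normal forms via \refrem{semiconj} and \refthm{strictFclass}, and for each class read off the configuration of compact curves in the associated Kato surface. Your treatment is simply more explicit, in particular on the bimeromorphic descent $S(f)\sim S(\wt f)$ and on the equivalence ``$C$ analytic $\Leftrightarrow P=0$'' in Class~$2$, points which the paper leaves implicit.
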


\begin{rem}
Notice that the classes of Kato surfaces described above are invariant by replacing $f$ with an iterate $f^n$ (one can see it geometrically, via a natural Galois covering $S(f^n) \to S(f)$). The only exception is given by even order coverings of half Inoue surfaces, that are hyperbolic Inoue.
\end{rem}

\begin{rem}
One could wonder if the existence of a strict germ with a given type of eigenvaluation restricts furthermore the singularity $(X,x_0)$. This is not the case: any sandwiched singularity can support strict germs with eigenvaluations of the three allowed types.
One way to construct examples has been described in \refssec{sandwichedselfsim}: by picking opportunely the point $y_1 \in Y'$ and the local isomorphism $\sigmaY$, one can construct examples for the three types of eigenvaluations.
The main reason lying inside this phenomenon, is that typically, even if the modification $\Kpi$ is minimal (in the sense that the only ($-1$)-curves in $X'$ contain the point $x_1$), the Kato surface $S(\Kpi,\Ksigma)$ might not be minimal when $(X,x_0)$ is singular.

\end{rem}

\begin{ex}
In the example of \refssec{strictnoeff}, we get a Kato surface $S=S(\Kpi,\Ksigma)$ with $k+1$ rational curves (see \reffig{quot}):
\begin{itemize}
\item a nodal curve $\ol{C}_1$, obtained from $E_1$ and the strict transform of $C_1$, of self-intersection $-1$;
\item $\ol{C}_1$ intersects transversely at a free point the curve $\ol{C}_3$, obtained from $E_3$ and the strict transform of $C_3$, of self-intersection $-k$;
\item finally $\ol{C}_3$ intersects $k-1$ curves $\ol{D}_j$, obtained from $D'_j$ and the strict transform of $D_j$, of self-intersection $-1$.
\end{itemize}
The minimal model for $S$, obtained by contracting $\ol{D}_j$ and $\ol{C}_3$, contains only the nodal curve with self-intersection $0$.
We are hence in the case of a Enoki surface, and the eigenvaluation $\nu_\star$ is a curve semivaluation.
\end{ex}


\small
\bibliographystyle{alpha}

\bibliography{biblio}

\end{document}